\newcommand{\Z}{\mathbb{Z}}
\newcommand{\R}{\mathbb{R}}
\newcommand{\C}{\mathbb{C}}
\newcommand{\Q}{\mathbb{Q}}
\newtheorem{thm}{Theorem}[section]
\newtheorem{theorem}{Theorem}[section]
\newtheorem{proposition}[theorem]{Proposition}
\newtheorem{lemma}[theorem]{Lemma}
\theoremstyle{definition}
\newtheorem{definition}[theorem]{Definition}
\newtheorem{remark}[theorem]{Remark}
\newcommand{\mf}{\mathfrak}
\newcommand{\sbst}{\subseteq}
\newcommand{\ad}{\mathrm{ad}}
\title{On the Monodromy of Meromorphic Cyclic Opers on the Riemann Sphere}
\author{Charles LeBarron Alley} 
\date{}
\begin{document}
\maketitle

\begin{abstract}
We study the monodromy of meromorphic cyclic $\mathrm{SL}(n,\C)$-opers on the Riemann sphere with a single pole. We prove that the monodromy map, sending such an oper to its Stokes data, is an immersion in the case where the order of the pole is a  multiple of $n$.  To do this, we develop a method based on the work of M.~Jimbo, T.~Miwa, and K.~Ueno from the theory of isomonodromic deformations.  Specifically, we introduce a system of equations that is equivalent to the isomonodromy equations of Jimbo-Miwa-Ueno, but which is adapted to the decomposition of the Lie algebra $\mathfrak{sl}(n,\mathbb{C})$ as a direct sum of irreducible representations of $\mathfrak{sl}(2,\mathbb{C})$.  Using properties of some structure constants for $\mathfrak{sl}(n,\mathbb{C})$ to analyze this system of equations, we show that deformations of certain families of cyclic $\mathrm{SL}(n,\mathbb{C})$-opers on the Riemann sphere with a single pole are never infinitesimally isomonodromic.
\end{abstract}

\section{Introduction}

In this paper, we study the monodromy of meromorphic cyclic $\mathrm{SL}(n,\C)$-opers over the Riemann sphere, which are flat meromorphic connections on a trivial vector bundle generalizing the linear ordinary differential equation
\begin{equation} \label{schwarzian eq}
y^{(n)} - \phi y=0
\end{equation}
where $\phi$ is a meromorphic function.   We will focus on the case where $\phi$ is a polynomial of positive degree. In this case, \eqref{schwarzian eq} has an irregular singularity at infinity. The monodromy of this equation is defined in terms of asymptotic expansions of local solutions near infinity.  For $n\geq 2$, the monodromy at infinity is described by the \textit{Stokes phenomenon}, which refers to the behavior of certain fundamental solutions of a differential equation with an irregular singularity after analytic continuation.  This phenomenon was first noticed by G.G. Stokes and later formalized by G.D. Birkhoff (see \cite{wasow}, section 15). 

The Stokes phenomenon, which we review in Section 2, plays an important role in the irregular Riemann-Hilbert correspondence, sometimes also called the Riemann-Hilbert-Birkhoff correspondence (see \cite{boalch_poisson} for a nice survey on this topic more general than that given here). The irregular Riemann-Hilbert correspondence gives a map, called the monodromy map or Riemann-Hilbert map.  This map assigns to a flat irregular connection on a holomorphic vector bundle over a punctured Riemann surface a tuple of change of basis matrices, called  the Stokes data of the connection, relating certain horizontal local trivializations.  Two gauge equivalent connections have the same Stokes data.   

In this paper we work with irregular connections on a trivial vector bundle over the Riemann sphere with a single pole of fixed order.  We add a marking, defined in Section 3, to such connections and denote by $\mathcal{M}_{dR}$ the space of  gauge equivalence classes of marked  connections, the deRham space.
The space of Stokes data  is
$$
\mathcal{B} = \{(S_1, \dots , S_{2k+2}) \in (U_+\times U_-)^{k+1} | S_1 S_2 \cdots S_{2k+2} \in T\}
$$
where $U_+$ and $U_-$ are the subgroups of $\mathrm{SL}(n,\C)$ of upper, respectively lower, triangular unipotent matrices, $k$ is a positive integer, and $T$ is the subgroup of diagonal matrices.  This space can be viewed as the representation variety of a wild surface group for the irregular curve associated to an irregular connection on the sphere.  The geometric invariant theory quotient
$$
\mathcal{M}_B=\mathcal{B}//T,
$$
sometimes called the Betti moduli space, is then a wild character variety of the type considered in \cite{wild_boalch}.  Wild character varieties are the analogue of the ``tame" character varieties considered by C.~Simpson and others (see for example \cite{simpson}).  In this article, we work only with the space $\mathcal{B}$ as a complex manifold.

Denote the monodromy map, sending an irregular connection to its Stokes data, by $\nu$.  We give the precise definition in Section 3 and note that $\nu$ descends to a map on the quotient of $\mathcal{M}_{dR}$ by automorphisms of the bundle over automorphisms of the base fixing the pole.

In Section 4, we restrict our attention to meromorphic cyclic $\mathrm{SL}(n,\C)$-opers on the Riemann sphere with a single pole. These are connections which correspond to matrix  differential equations of the form
$$
\frac{d}{dz}Y = AY
$$ where  the matrix function $A(z)$ has the form
$$
A = \left(\begin{matrix}
0 & 1 & 0 & & \dots & & 0 \\
 & 0 & 1 & 0 &  & &  \\
\vdots &  & & \ddots & & & \vdots \\
 &  & &  & 0 &  1 & 0 \\
0 &  & &  & & 0 & 1 \\
p & 0 & & \dots & & & 0
\end{matrix}\right)
$$
where $p$ is a polynomial.  Let $d=\deg p$ and assume $d \geq 2$.  We consider the set of opers on the Riemann sphere arising from such equations.  The  group $\mathrm{Aut}(\C) = \{z \mapsto az+b\}$ of affine transformations of the plane acts on this set and  under this action, there is a distinguished normalization of such opers.  We  define $\mathcal{P}_{n,d}$ to be the set of such normalized opers.  This is an affine space modeled on the vector space of polynomials of degree $d-2$.  

We describe, in Section 4, how to define the Stokes data of an element of $\mathcal{P}_{n,d}$ and having done so define the monodromy map for mermorphic cyclic opers on the Riemann sphere with a single pole, which we also denote by $\nu$.  The main result is then:

\begin{thm}  \label{immersion}
If $d=kn$ for some positive integer $k$, then the monodromy map 
$$
\nu: \mathcal{P}_{n,d} \to \mathcal{B}
$$
is a holomorphic immersion. 
\end{thm}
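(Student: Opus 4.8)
The plan is to show that $d\nu$ is injective at every point of $\mathcal{P}_{n,d}$. Since $\mathcal{P}_{n,d}$ is an affine space modeled on polynomials of degree $d-2$, a tangent vector at an oper $\nabla_0$ is represented by a polynomial $q$ of degree $\le d-2$, corresponding to the one-parameter family $\nabla_t$ with $p_t = p_0 + tq$. The derivative $d\nu(\nabla_0)(q)$ vanishes precisely when this infinitesimal deformation is isomonodromic, i.e.\ when the Stokes data is constant to first order in $t$. So the theorem is equivalent to the statement: \emph{no nonzero polynomial deformation $q$ of degree $\le d-2$ is infinitesimally isomonodromic}, which is exactly the kind of statement the abstract advertises.

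First I would set up the infinitesimal isomonodromy equations à la Jimbo–Miwa–Ueno. Deformation parameters here are the coefficients of $p$ (equivalently the ``times'' of the irregular type at infinity plus apparent‐singularity–type data); an infinitesimal deformation is isomonodromic iff there exists a meromorphic gauge transformation — a matrix $B(z)$ holomorphic away from infinity with a prescribed polar/asymptotic behavior at $\infty$ — solving the linearized equation $\partial_t A = \frac{dB}{dz} + [B, A]$ (a single equation, since here there is one irregular singular point and the ``time'' derivative $\partial_t$ is the derivative along $p_t$). The obstruction to solving this is a cohomological condition. The key reduction is to exploit the cyclic/companion structure of $A$: conjugating by the diagonal torus $\mathrm{diag}(1,\zeta,\dots,\zeta^{n-1})$ realizes a $\Z/n$‐symmetry, and the principal $\mathfrak{sl}(2,\C) \hookrightarrow \mathfrak{sl}(n,\C)$ (with $A = E_- + (\text{lowest root vector})\cdot p$, where $E_-$ is the principal nilpotent) lets us decompose $\mathfrak{sl}(n,\C) = \bigoplus_{j=1}^{n-1} V_{2j}$ into irreducible $\mathfrak{sl}(2,\C)$‐modules. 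I would rewrite the linearized equation block‐by‐block in this decomposition: the principal nilpotent acts as a raising/lowering operator within each $V_{2j}$, and the multiplication‐by‐$p$ term couples components via the structure constants of $\mathfrak{sl}(n,\C)$ relative to this decomposition.

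The heart of the argument is then a degree/valuation count on this decoupled system. Writing $B = \sum_j B_j$ with $B_j$ valued in $V_{2j}$ and each component a Laurent-polynomial matrix in $z$, the equation $\partial_t A = B' + [B,A]$ becomes, in each $V_{2j}$ block, a first‐order linear ODE in $z$ with polynomial coefficients whose inhomogeneous term is (a component of) $\partial_t A$, which lives only in the lowest block and has $z$‐degree $\le d-2$. The hypothesis $d = kn$ is used here: it forces the irregular type at $\infty$ to be ``untwisted'' (the Stokes structure has $2k+2$ sectors, $n | d$ making $z^{d/n}$ the relevant local coordinate), so the allowed polar growth of $B$ at infinity is controlled by an integer bound that is \emph{just small enough} to exclude a nonzero solution. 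Concretely, I would show that matching the top‐degree terms in $z$, propagated through the blocks via the raising operator $\mathrm{ad}_{E_-}$ and the structure constants, forces all $B_j = 0$ and then $q = 0$; the nonvanishing of the relevant structure constants for $\mathfrak{sl}(n,\C)$ (which the abstract flags as an input) is what makes the propagation non‐degenerate.

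The main obstacle I anticipate is precisely the structure‐constant bookkeeping: one must show that the linear map sending the deformation polynomial $q$ to the forced values of the $B_j$ (and ultimately to a contradiction unless $q=0$) is injective, and this reduces to showing that a certain matrix built from Clebsch–Gordan‐type coefficients for the principal $\mathfrak{sl}(2,\C)$ inside $\mathfrak{sl}(n,\C)$ is nonsingular. Getting the polar‐order bookkeeping at $\infty$ exactly right — so that the count is tight and genuinely uses $n \mid d$ rather than merely $d$ large — is the delicate point; I would isolate it as a separate lemma about the allowed form of $B$ (its behavior at $\infty$ dictated by the requirement that the gauge transformation preserve the formal normal form and Stokes matrices), and then the final contradiction is a clean induction downward on the block index $j$ from the top block $V_{2(n-1)}$ to the bottom.
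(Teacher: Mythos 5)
Your overall strategy is the same as the paper's: reduce injectivity of $d\nu$ to the nonexistence of a nonzero infinitesimally isomonodromic deformation via the Jimbo--Miwa--Ueno criterion, rewrite the linearized equation $\Omega' = \dot A + [A,\Omega]$ using the principal $\mathfrak{sl}(2,\C)\hookrightarrow\mathfrak{sl}(n,\C)$ and the decomposition $\mathfrak{sl}(n,\C)\cong\bigoplus_{i=1}^{n-1}V_{2i+1}$, and kill polynomial solutions by a degree count driven by the structure constants. However, the step you flag as the ``main obstacle'' is where the genuine gap lies, and the substitute you propose for it would not work. You suggest the needed input is that a certain Clebsch--Gordan-type matrix is \emph{nonsingular}. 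What the argument actually requires is a \emph{sign-coherence} property of the structure constants $c_{i,j,k}$ defined by $[f_{n-1},v_{n-1-k,n-1-j}]=\sum_i c_{i,j,k}v_{i,-j}$: for fixed $i,k$, if $c_{i,k,k}\neq 0$ then all $c_{i,j,k}$ with $k\le j\le i$ are nonzero \emph{with the same sign} (the paper's Lemma 5.6, proved by commuting $\mathrm{ad}_{f_{n-1}}$ with $\mathrm{ad}_{\tilde f}$ and using positivity of the lowering coefficients $a_{i,j}=(i+j)(i-j+1)$). This is what lets the collapsed equation for $\omega_{i,i}^{(2i+1)}$ be written as a sum of expressions $\sum_j c_j p^{(k-j)}\omega^{(j)}$ with non-negative coefficients, not all zero, so that the top-degree coefficients cannot cancel and the inequality $d_0-(2i+1)\ge \deg p + d_0 - i + j_{i,1}$ yields a contradiction. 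Nonsingularity of a coefficient matrix gives you no control over cancellation of leading terms in a degree comparison, so your ``matching the top-degree terms'' step has no justification as stated.

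A second, more minor, misplacement: the hypothesis $d=kn$ is not used to make the polar-growth bound on $\Omega$ at infinity ``just tight enough'' --- the final degree count only uses $\deg\dot p\le d-2 < \deg p -1$ and works identically for any $d$. The divisibility is needed earlier, to make the shearing gauge transformation $g=\mathrm{diag}\bigl(z^{(n-j)k}\bigr)$ single-valued on $\C^*$, so that the cyclic oper becomes a genuine semisimple (unramified) irregular connection on the sphere to which Theorem 3.3 applies; conjugating the resulting rational solution back by $g$ then forces $\Omega$ to be entire with a pole only at $\infty$, i.e.\ a polynomial. (Also, the irreducible summands have dimension $2i+1$, i.e.\ are $V_{2i+1}$ in the paper's notation, not $V_{2j}$ --- cosmetic, but worth fixing.)
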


Our strategy of proof is as follows. In \cite{jimbo}, M.~Jimbo, T.~Miwa, and K.~Ueno give necessary and sufficient criteria that the Stokes data of a given differential equation stay constant as the equation is deformed.  Explicitly, they consider a family of differential equations on the complex plane
$$
\frac{\partial}{\partial z}Y(z,t) = A(z,t)Y(z,t)
$$
where $A(z,t)$ is a rational matrix valued function in the variable $z$, varying in a parameter $t$.  They show that the Stokes data of this system is constant in $t$ if and only if there exists a matrix valued function $\Omega(z,t)$, rational in $z$ with the same poles as $A(z,t)$, satisfying the differential equation
\begin{equation} \label{intro jmu}
\frac{\partial}{\partial z}\Omega = \frac{\partial}{\partial t}A + [A,\Omega].
\end{equation}
Here $[\cdot,\cdot]$ is the Lie bracket, or matrix commutator.  Using this, we show that a tangent vector $\dot{A}$ is in the kernel of $d\nu$ at a point in $\mathcal{P}_{n,d}$ if and only if there exists a polynomial matrix valued function $\Omega: \C \to \mathrm{End}(\C^n)$ such that
\begin{equation} \label{JMU}
\frac{\partial}{\partial z}\Omega = \dot{A} + [A,\Omega].
\end{equation}  
This result is given in Section 4.  In Section 6, we show that \eqref{JMU} reduces to a system of $n-1$ ordinary differential equations for the coefficients of $\Omega$ using the structure of $\mathfrak{sl}(n,\C)$ as an $\mathfrak{sl}(2,\C)$-module, which is described in Section 5.  We then argue that this system can have no non-trivial polynomial solutions using degree considerations.  

The reader should note that equation \eqref{schwarzian eq} has been extensively studied in the case $n=2$ and, in that case, is known as the \textit{Schwarzian equation}.  In fact, for $n=2$, Theorem \ref{immersion} follows from a more general theorem due to I.~Bakken.  In \cite{bakken}, Bakken proves that the map $\nu$ is an immersion without any restrictions on the degree of $p$.  In that paper, the Stokes data are given by tuples of asymptotic values.  
Recently, in \cite{wild_boalch}, P.~Boalch made explicit the interpretation of Bakken's theorem in terms of Stokes data, showing that the space of asymptotic values considered by Bakken is (explicitly) algebraically isomorphic to the Betti moduli space $\mathcal{M}_B$ defined above.

Bakken was a student of Y.~Sibuya, who contributed extensively to the study of the Stokes phenomenon.  In particular, the book \cite{sibuya} is dedicated to the study of the monodromy of equation \eqref{schwarzian eq} for $n=2$ and $\phi$ a polynomial.  The work of Sibuya and Bakken was motivated in part by the work of  R.~Nevanlinna, especially the paper \cite{nevanlinna}, on functions with polynomial Schwarzian derivative.   Nevanlinna proved that a function with polynomial Schwarzian derivative of degree $d$ has exactly $d+2$ asymptotic values, which are pairwise distinct.  This gives a map from polynomials of degree $d$ to $(d+2)$-tuples of extended complex numbers which are pairwise distinct.  As a PhD research project, the author of the present article was tasked with investigating the properties of this map which, it turns out, is just a special case of the monodromy map investigated here.  

\textit{Funding}:  This work was partially supported by U.S. National Science Foundation grants DMS 1107452,
1107263, 1107367 ``RNMS: Geometric Structures and Representation Varieties (the GEAR Network)."

\textit{Acknowledgements}:  This paper is a revised version of the author's PhD thesis.  The author would like to thank his advisor David Dumas for his guidance as well as Irina Nenciu, Steven Rayan, Julius Ross, and Laura Schaposnik for their support and comments.  The author would like to thank Philip Boalch and also Davide Masoero for clarifying remarks and for introducing the author to several papers important to the present article.  Also, thanks go to Fr\'{e}d\'{e}ric Paulin for hosting the author in Orsay during a very enlightening week in March 2017 in which many fruitful conversations were had which ultimately led to this paper.  Finally, the author would like to thank the anonymous referees for their detailed comments which greatly improved this article.

\section{The Stokes Phenomenon} \label{background}

In this section we review the Stokes phenomenon in order to define the space of Stokes data.  Most of the notation and terminology given here is taken from \cite{boalch_thesis}.

A meromorphic connection on a rank $n$ vector bundle $V$ on a Riemann surface is defined by a choice of effective divisor $D$, prescribing the position and order of poles, as a $\C$-linear map
$$
\nabla:V \to V \otimes K(D)
$$
satisfying the Leibniz rule:
$$
\nabla(fs) = df\otimes s + f\nabla s.
$$
Here, we identify $V$ with its sheaf of local sections and $K(D)$ is the sheaf of meromorphic 1-forms with poles along $D$.  Given a local coordinate $z$ and a frame for $V$ we can write
\begin{equation} \label{local form}
\nabla = d - A(z)dz
\end{equation}
where $A$ is a $\mathfrak{gl}(n,\C)$ valued meromorphic function.  The matrix of 1-forms $A(z)dz$ is called a \textit{local connection form}.  A matrix valued function $Y(z)$ satisfying the linear ordinary differential equation
\begin{equation} \label{ode}
\frac{d}{dz}Y - A(z)Y = 0
\end{equation}
and whose columns are linearly independent is called a \textit{fundamental solution} of  \eqref{ode} or, equivalently, a \textit{horizontal local trivialization} of $\nabla$.

We now specialize to the case of a meromorphic connection on a trivial vector bundle over the Riemann sphere.  If we choose a coordinate $z$ so that a pole corresponds to the point at infinity  we can then write $A(z)$ as
\begin{equation} \label{Laurent}
A(z) = z^{k}\sum_{j=0}^\infty A_j z^{-j}
\end{equation}
outside of a $z$-disk of some radius.  If $A_0$ is not nilpotent and $k \geq 0$, both of these conditions being independent of local coordinate, then \eqref{ode} has an \textit{irregular singularity of Poincar\'e rank $k+1$} at the pole.  A meromorphic connection with an irregular singularity is called an \textit{irregular connection}.

In the special case where $A_0$ has distinct eigenvalues, there is an algebraic procedure to produce a unique formal solution to \eqref{ode} (see for example \cite{wasow}, Section 11, or \cite{boalch_thesis}, Appendix B).  Then we have a theorem, attributed to G.D. Birkhoff, giving the existence of holomorphic fundamental solutions in sectors based at $z=0$ which have asymptotic series representation given by this formal solution.  Before stating the theorem we must give the following definition.

\begin{definition} \cite{wasow}
Let $f(z)$ be a complex valued function defined on a set $S \sbst \C$ with infinity as an accumulation point.  Let 
$$
\hat{f} = \sum_{j=0}^\infty c_jz^{-j} \in \C[[z^{-1}]]
$$
be a formal power series in the variable $z^{-1}$.  We write $f \sim \hat{f}$ or say $f$ \textit{has asymptotic series representation} $\hat{f}$ \textit{as} $z \to \infty$ \textit{in} $S$ if for all $m \geq 0$ we have
$$
z^{m}\left(f(z) - \sum_{j=0}^m c_jz^{-j} \right) \to 0
$$
as $z \to \infty$ in $S$.  
\end{definition}

Note that $\hat{f} \in \C[[z^{-1}]]$, the ring of \textit{formal} power series in $z^{-1}$.  In this paper, a ``hat'' will indicate that a symbol is such a formal series.  Writing $Y \sim \widehat{Z}$ for $Y$ a matrix valued function and $\widehat{Z}  \in \mathrm{GL}(n,\C[[z^{-1}]])$ means that the entries of $Y$ have asymptotic series representation given by the respective entries of $\widehat{Z}$, each of which is a formal power series.  We denote by $d\widehat{Z}/dz$ the series obtained from $\widehat{Z}$ via term by term differentiation.

\begin{theorem}\cite{wasow} \label{birkhoff}
Assume, with notation as above, that $A_0$ has distinct eigenvalues $\lambda_1, \dots \lambda_n$ and $k \geq 0$.  There exists a formal matrix 
$$
\widehat{Y} \in \mathrm{GL}(n,\C[[z^{-1}]]),
$$
a diagonal scalar matrix $\Lambda$ and a diagonal matrix valued polynomial function $Q$ in the variable $z$ of degree $k+1$ with no constant term and with most singular term $\frac{z^{k+1}}{k+1}\mathrm{diag}(\lambda_1, \dots , \lambda_n)$ such that 
$$
\frac{d}{dz}\widehat{Y} = A\widehat{Y} - \widehat{Y}\left(\frac{d}{dz}Q + \Lambda z^{-1}\right).
$$

Moreover, let $S$ be an open sector based at $z=0$ of interior angle less than or equal to $\pi/(k+1)$. Then there exists  a fundamental solution $Y$ to \eqref{ode}, holomorphic on $S$, satisfying 
\begin{equation} \label{formal solution}
Y \sim \widehat{Y}z^{\Lambda} \exp(Q) \text{ as } z \to \infty \text{ in } S.
\end{equation}
\end{theorem}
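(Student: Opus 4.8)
The plan is to follow the classical two-step approach: first construct the formal solution, then the sectorial holomorphic solution asymptotic to it (this is the route taken in \cite{wasow}).

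Since $A_0$ has distinct eigenvalues it is diagonalizable, so after conjugating \eqref{ode} by a constant matrix in $\mathrm{GL}(n,\C)$ I may assume $A_0 = \mathrm{diag}(\lambda_1,\dots,\lambda_n)$. For the formal part I would look for $\widehat Y = \sum_{j \geq 0} Y_j z^{-j}$ with $Y_0 = I$, a diagonal matrix-valued polynomial $Q(z) = \sum_{l=1}^{k+1}\frac{1}{l}q_l z^l$ (so $Q$ has no constant term), and a diagonal scalar matrix $\Lambda$; setting $B := \frac{d}{dz}Q + \Lambda z^{-1}$, which is diagonal, and using that $z^{\Lambda}\exp(Q)$ solves $\frac{d}{dz}F = FB$, the substitution $Y = \widehat Y z^{\Lambda}\exp(Q)$ turns \eqref{ode} into exactly the formal identity $\frac{d}{dz}\widehat Y = A\widehat Y - \widehat Y B$ asserted in the theorem. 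I would determine the $Y_j$, the $q_l$, and $\Lambda$ by matching coefficients of powers of $z$. Because $\frac{d}{dz}\widehat Y$ contributes only to the coefficients of $z^{-2}$ and below, the coefficients of $z^{k}, z^{k-1}, \dots, z^{-1}$ give purely algebraic relations which, after isolating the unknowns introduced at each step, have the form $[A_0, Y_m] - B^{(k-m)} = C_m$, where $B^{(s)}$ denotes the coefficient of $z^s$ in $B$ (so $B^{(-1)} = \Lambda$) and $C_m$ is known by induction. The diagonal part of this relation determines $B^{(k-m)}$ — hence the next coefficient of $Q$, and at the $z^{-1}$ level the matrix $\Lambda$ — since $\mathrm{ad}_{A_0}$ kills diagonal matrices; the off-diagonal part determines the off-diagonal entries of $Y_m$, since $\mathrm{ad}_{A_0}$ is \emph{invertible} on off-diagonal matrices (this is exactly where distinctness of the $\lambda_i$ is needed). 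The residual freedom in the diagonal entries of the $Y_j$ is then pinned down consistently by the equations coming from the powers $z^{-2}$ and below, and the recursion never obstructs. Finally, the coefficient of $z^{k}$ gives $q_{k+1} = A_0$, so the most singular term of $Q$ is $\frac{z^{k+1}}{k+1}\mathrm{diag}(\lambda_1,\dots,\lambda_n)$.

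For the analytic part I would fix a sector $S$ based at $z=0$ of opening at most $\pi/(k+1)$, truncate $\widehat Y$ to a polynomial $\widehat Y_{(N)}(z) = \sum_{j=0}^{N} Y_j z^{-j}$, and seek the fundamental solution in the form $Y = \widehat Y_{(N)}(z)\, z^{\Lambda}\exp(Q(z))\,(I+W(z))$ with $W \to 0$. Using the formal identity, which after truncation holds up to an error $R_N = O(z^{k-N-1})$, a short computation reduces $\frac{d}{dz}Y = AY$ to the linear equation $\frac{d}{dz}W = G + GW$ with $G(z) = -F(z)^{-1}\widehat Y_{(N)}(z)^{-1}R_N(z)F(z)$, $F := z^{\Lambda}\exp(Q)$. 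Since $F$ is diagonal, the $(i,j)$ entry of $G$ carries the factor $z^{\Lambda_j - \Lambda_i}\exp\big(q_j(z) - q_i(z)\big)$, where $\Lambda_i$ is the $i$-th diagonal entry of $\Lambda$. I would then pass to an integral equation, integrating each entry along a path on which $\mathrm{Re}\big(q_j(\zeta) - q_i(\zeta)\big)$ is nonincreasing, so as to obtain a Volterra-type fixed-point equation $W = \mathcal K(W)$ with $\mathcal K$ a contraction on a Banach space of continuous functions on $S$ decaying like $z^{-M}$ for some $M = M(N) \to \infty$. This yields $W = O(z^{-M})$ on $S$, and letting $N \to \infty$ gives $Y \sim \widehat Y z^{\Lambda}\exp(Q)$ as $z \to \infty$ in $S$.

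I expect the path selection in the analytic part to be the main obstacle, and it is precisely where the hypotheses are used. The rays along which $\mathrm{Re}(q_j - q_i)$ changes sign are the Stokes rays of the pair $(i,j)$, and consecutive Stokes rays for a fixed pair are separated by an angle $\pi/(k+1)$ — a consequence of the leading exponent $\tfrac{\lambda_j - \lambda_i}{k+1}z^{k+1}$ being genuinely of degree $k+1$, which uses $\lambda_i \neq \lambda_j$. On a sector of opening at most $\pi/(k+1)$ one can then route, simultaneously for all pairs $(i,j)$, an integration contour from $z$ toward a suitable endpoint (in $S$ or at infinity) along which the exponential kernel $\exp\big(q_j(\zeta) - q_i(\zeta) - q_j(z) + q_i(z)\big)$ remains bounded — a ``progressive path.'' Once such contours are available, the contraction estimate and the passage from a fixed truncation order to the full asymptotic expansion are routine; were the sector wider, or two eigenvalues of $A_0$ equal, these contours need not exist and the theorem would fail or require a finer analysis.
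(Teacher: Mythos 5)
The paper offers no proof of this statement: it is quoted as a classical result of Birkhoff/Wasow, with the reader referred to Chapter 4 of \cite{wasow} for details. Your outline --- the formal recursion hinging on the invertibility of $\mathrm{ad}_{A_0}$ on off-diagonal matrices (which is exactly where distinctness of the $\lambda_i$ enters), followed by truncation, a Volterra-type integral equation along progressive paths, and the role of the opening $\pi/(k+1)$ in choosing those paths --- is a faithful sketch of precisely the argument in that reference, so it is consistent with the paper's (delegated) treatment.
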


For details see \cite{wasow}, Chapter 4.  
The right hand side of \eqref{formal solution} should be thought of as a power series representing the product of the formal series $\widehat{Y} \in \mathrm{GL}(n,\C[[z^{-1}]])$ with the function $z^{\Lambda} \exp(Q)$ which is holomorphic on a slit plane (corresponding to a choice of logarithm). 

The key point is that the formal solution $\widehat{Y}z^{\Lambda} \exp(Q)$ is independent of the choice of sector, thus providing a canonical way of describing solutions in a neighborhood of the pole.  In more modern terminology, what this theorem says is that a meromorphic connection $\nabla$ with local form given by \eqref{local form} where $A_0$ has distinct eigenvalues is \textit{formally gauge equivalent} in a neighborhood of an irregular singularity to one with connection form
\begin{equation} \label{irrtype}
A^0(z)dz=dQ+\Lambda z^{-1} \, dz.
\end{equation}

Explicitly, we define an action of the group $\mathrm{GL}(n,\C[[z^{-1}]])$ on the set of meromorphic connections on trivial bundles over the Riemann sphere as follows.  Given a meromorphic connection $\nabla = d - A(z)dz$ and a formal transformation $\widehat{F} \in \mathrm{GL}(n,\C[[z^{-1}]])$, define the action by $(\widehat{F},\nabla) \mapsto \widehat{F}[\nabla] =d - \widehat{F}[A]dz$ where
$$
\widehat{F}[A] = \left(\frac{d\widehat{F}}{dz} \widehat{F}^{-1} + \widehat{F}A\widehat{F}^{-1}\right).
$$
Following \cite{boalch_thesis}, we call $\widehat{F}$  a \textit{formal gauge transformation} and we say that two connections $d-A_1(z)dz$ and $d-A_2(z)dz$ are \textit{formally gauge equivalent} if there exists an $\widehat{F}$ such that $\widehat{F}[A_1]=A_2$.

Now, it is straightforward to check that if $\widehat{Y}$ is the formal series defined in Theorem \ref{birkhoff}, then $\widehat{Y}[A^0] = A$, where $A^0$ is as defined in \eqref{irrtype}.
We call  $Q$ the \textit{irregular type}, and $\Lambda$ the \textit{exponent of formal monodromy} of the given connection.  These data are all local; they depend on the pole position and order.

\begin{definition}
We call a connection to which Theorem \ref{birkhoff} applies a  \textit{semisimple irregular connection}.   
\end{definition}

Thus, a semisimple irregular connection $\nabla$ on a trivial vector bundle $V$  over the Riemann sphere is one such that the matrix $A_0$ in \eqref{Laurent} has distinct eigenvalues.  We remind the reader that to obtain the expression \eqref{Laurent}, we must first choose a local coordinate $z$ and framing for V. It is important to note that the formal gauge transformation $\widehat{Y}$ described in Theorem \ref{birkhoff} depends on the choice of frame for the vector bundle $V$.  To define this transformation, we must first diagonalize the matrix $A_0$ appearing in \eqref{Laurent}.  Choosing  $f \in \mathrm{GL}(n,\mathbb{C})$ such that $f^{-1}A_0f$ is diagonal and applying the formal procedure alluded to in Theorem \ref{birkhoff} produces a formal gauge transformation $\widehat{Y} \in \mathrm{GL}(n,\mathbb{C}[[z^{-1}]]$ with scalar term $\widehat{Y}(\infty) = f^{-1}$.  Again, the reader can consult Section 11 of \cite{wasow} or Appendix B of \cite{boalch_thesis} for details.  This will be important in the definition of the deRham moduli space given in Section \ref{jimbo section}.

\begin{definition} \cite{boalch_thesis}
A {\em compatible framing for $\nabla$ at $\infty$}  is a choice of frame for the vector bundle $V$ so that the matrix $A_0$ appearing in \eqref{Laurent} is diagonal.  
\end{definition}

Theorem \ref{birkhoff} gives us a detailed understanding of how solutions to \eqref{ode} are forced to change as they are analytically continued along paths near irregular singularities.  This is called the Stokes phenomenon and it gives rise to the notion of monodromy studied in this paper which we now describe, closely following \cite{boalch_thesis}.  From Theorem \ref{birkhoff}, we have $Q = \frac{1}{k+1}\mathrm{diag}(q_1,\dots, q_n)$ where each $q_i$ is a polynomial in $z$ of degree $k+1$ with no constant term.  Write $q_i = \lambda_iz^{k+1} + \dots$.  

\begin{definition} \cite{boalch_thesis} \label{antiStokesdef}
An \textit{anti-Stokes direction} for the system \eqref{ode} is a $d \in S^1$ such that for all $z \in \C$ with $\arg(z)=d$ and for some $i \neq j$ we have
\begin{equation} \label{antiStokes}
(\lambda_i-\lambda_j)z^{k+1} \in \R_{<0}.
\end{equation}
\end{definition}

Note that the set of all anti-Stokes directions is invariant under rotation by $\pi/(k+1)$.  It then follows that to determine the total number $r$ of anti-Stokes directions we need only consider a sector of internal angle $\pi/(k+1)$, in which there are at most $\binom{n}{2}=n(n-1)/2$ anti-Stokes rays.  Also, note that $r$ is divisible by $2(k+1)$. 

We now wish to order the anti-Stokes directions so that we can describe sectors where a canonical choice of solution to \eqref{ode} can be made.  To do this, we choose a small sector based at the origin which contains no  anti-Stokes directions.  Then, consider a  circular path about the origin, oriented counterclockwise, based at a point within the sector.  As we follow the path, we encounter a first anti-Stokes direction $d_1$.  Continuing to follow the path we eventually meet every anti-Stokes direction and label each as it is crossed until we have ordered all the anti-Stokes directions as $d_1,\dots, d_r$.  Thus, if we were to continue this procedure, we must have $d_{r+1} = d_1$ and so the index of the anti-Stokes directions will be taken modulo $r$.

Define the $i^{th}$ \textit{Stokes sector} to be 
$$
\mathrm{Sect}_i = \mathrm{Sect}(d_{i},d_{i+1}) = \{z \in \C | d_{i} < \arg(z) < d_{i+1}\}. $$
Then define the $i^{th}$ \textit{supersector} to be
$$
\widehat{\mathrm{Sect}_i} = \mathrm{Sect}\left(d_{i} - \frac{\pi}{2(k+1)}, d_{i+1}+\frac{\pi}{2(k+1)}\right)
$$ 
Then we have the following result. 

\begin{proposition} \cite{boalch_thesis}
In each $\mathrm{Sect}_i$ there is a unique choice of invertible holomorphic fundamental solution $\Phi_i$ of \eqref{ode} which, upon analytic continuation to $\widehat{\mathrm{Sect}_i}$, has asymptotic series representation as $z \to \infty$ in $\widehat{\mathrm{Sect}_i}$ given by the formal solution of Theorem \ref{birkhoff}; that is as $z$ goes to $\infty$ in $\widehat{\mathrm{Sect}_i}$ we have
$$
\Phi_i(z) \sim \widehat{Y}z^{\Lambda} \exp(Q).
$$
\end{proposition}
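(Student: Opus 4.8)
The plan is to produce $\Phi_i$ by applying Theorem \ref{birkhoff} on a sector of the maximal allowed width, then to enlarge the sector on which its asymptotic expansion is valid to the full supersector $\widehat{\mathrm{Sect}_i}$, and finally to see that uniqueness is forced by the fact that $\widehat{\mathrm{Sect}_i}$ is strictly wider than $\pi/(k+1)$.

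First I would record that $\mathrm{Sect}_i=(d_i,d_{i+1})$ has interior angle at most $\pi/(k+1)$: the set of anti-Stokes directions is invariant under rotation by $\pi/(k+1)$ and is nonempty (any pair $(i,j)$ with $\lambda_i\neq\lambda_j$ produces one via Definition \ref{antiStokesdef}), so it meets every half-open arc of length $\pi/(k+1)$, and hence consecutive anti-Stokes directions are at most $\pi/(k+1)$ apart. Thus Theorem \ref{birkhoff} applies with $S=\mathrm{Sect}_i$ and gives a fundamental solution of \eqref{ode}, holomorphic on $\mathrm{Sect}_i$, with asymptotic expansion $\widehat{Y}z^{\Lambda}\exp(Q)$ there. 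However, $\widehat{\mathrm{Sect}_i}$ has interior angle $(d_{i+1}-d_i)+\pi/(k+1)$, which is strictly larger than $\pi/(k+1)$, so this single application does not yet give a solution with that asymptotic expansion on all of $\widehat{\mathrm{Sect}_i}$: the expansion must be propagated. For this I would cover $\widehat{\mathrm{Sect}_i}$ by finitely many open sectors of interior angle at most $\pi/(k+1)$, overlapping consecutively in sectors of positive opening, use Theorem \ref{birkhoff} on each to get local fundamental solutions $\Psi_a\sim\widehat{Y}z^{\Lambda}\exp(Q)$, and glue. On an overlap, $\Psi_{a+1}=\Psi_aC$ for a constant invertible matrix $C$; setting $G=z^{\Lambda}\exp(Q)$ and using that $\Psi_aG^{-1}$ and $\Psi_{a+1}G^{-1}$ both have asymptotic power series $\widehat{Y}$, one finds $GCG^{-1}\sim I$ on the overlap, so the diagonal of $C$ is $1$ and an off-diagonal entry $C_{lj}$ can be nonzero only in positions recessive on the overlap (where $\exp(Q_l-Q_j)\to0$, with $Q_l$ the $l$-th diagonal entry of $Q$). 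Thus the transition matrices are unipotent of a shape governed by the dominance pattern of the $\exp(Q_l-Q_j)$. The remaining and most substantive step is to exploit the freedom to modify each $\Psi_a$ by its own Stokes factor, together with the facts that $\mathrm{Sect}_i$ contains no anti-Stokes direction and that $\widehat{\mathrm{Sect}_i}$ is $\mathrm{Sect}_i$ widened by exactly the half-period $\pi/(2(k+1))$ on each side, in order to arrange that all the transition matrices are trivial; then the $\Psi_a$ glue to a single solution $\Phi_i\sim\widehat{Y}z^{\Lambda}\exp(Q)$ on $\widehat{\mathrm{Sect}_i}$, whose restriction to $\mathrm{Sect}_i$ is the solution in the statement. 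This propagation is the classical sectorial theory of linear systems with an irregular singularity (see \cite{wasow}, Chapters~4--5, and \cite{boalch_thesis}), and it is the step I expect to be the main obstacle.

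For uniqueness, suppose $\Phi_i$ and $\Phi_i'$ are fundamental solutions of \eqref{ode} on $\mathrm{Sect}_i$ whose analytic continuations to $\widehat{\mathrm{Sect}_i}$ each satisfy $\sim\widehat{Y}z^{\Lambda}\exp(Q)$ there. Since $\widehat{\mathrm{Sect}_i}$ is simply connected, $C:=\Phi_i^{-1}\Phi_i'$ is a constant invertible matrix on it, and with $G=z^{\Lambda}\exp(Q)$ the functions $\Phi_iG^{-1},\Phi_i'G^{-1}$ both have asymptotic power series $\widehat{Y}$, so $GCG^{-1}=(\Phi_iG^{-1})^{-1}(\Phi_i'G^{-1})$ has the asymptotic power series $\widehat{Y}^{-1}\widehat{Y}=I$ on $\widehat{\mathrm{Sect}_i}$. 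As $z^{\Lambda}$ is diagonal with polynomially bounded entries, the $(l,j)$ entry of $GCG^{-1}$ is $C_{lj}$ times a function whose modulus grows like $|z|^{c}\exp\!\big(\mathrm{Re}(Q_l-Q_j)\big)$ for a real constant $c$. For $l=j$ this forces $C_{ll}=1$. For $l\neq j$ we have $\lambda_l\neq\lambda_j$, so $Q_l-Q_j$ is a polynomial of degree $k+1$ with nonzero leading coefficient $\tfrac{1}{k+1}(\lambda_l-\lambda_j)$, whence $\mathrm{Re}(Q_l-Q_j)\to+\infty$ along an open arc of directions of length $\pi/(k+1)$; since $\widehat{\mathrm{Sect}_i}$ has opening strictly greater than $\pi/(k+1)$ it contains such a direction, along which $|z|^{c}\exp(\mathrm{Re}(Q_l-Q_j))$ is unbounded, so the $(l,j)$ entry of $GCG^{-1}$ cannot tend to $0$ as $z\to\infty$ in $\widehat{\mathrm{Sect}_i}$ unless $C_{lj}=0$. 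Therefore $C=I$ and $\Phi_i=\Phi_i'$. It is precisely this last step that uses the extra width $d_{i+1}-d_i$ of $\widehat{\mathrm{Sect}_i}$ over $\pi/(k+1)$, which is why the proposition — asserting validity of the asymptotic on the supersector rather than merely on a $\pi/(k+1)$-sector — yields a genuinely canonical solution.
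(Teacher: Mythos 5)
The paper does not prove this proposition; it is quoted from \cite{boalch_thesis} (where it rests on the sectorial theory of \cite{Lutz} and \cite{wasow}), so there is no internal proof to compare against. Judged on its own terms, your uniqueness argument is complete and correct, and it is the standard one: it correctly isolates that the opening of $\widehat{\mathrm{Sect}_i}$ exceeds $\pi/(k+1)$, so that for every pair $l\neq j$ the supersector contains a direction along which $\exp(\mathrm{Re}(Q_l-Q_j))$ grows like $e^{c|z|^{k+1}}$, forcing $C=I$.

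The existence half, however, has a genuine gap, and you have located it yourself: the step of ``arranging that all the transition matrices are trivial'' is asserted, not proved, and it is precisely the content of the proposition. The difficulty is that your gluing scheme needs more than the observation that a transition matrix $C$ on an overlap is unipotent with support in the positions recessive \emph{on the overlap}: to absorb $C$ into $\Psi_a$ without destroying its asymptotic expansion you need $C$ to be recessive on \emph{all} of the subsector carrying $\Psi_a$, and nothing in your setup guarantees that the covering can be chosen so this holds. The classical resolution uses the specific geometry of the supersector: the only anti-Stokes directions in $\widehat{\mathrm{Sect}_i}\setminus\mathrm{Sect}_i$ lie within $\pi/(2(k+1))$ of $d_i$ and $d_{i+1}$, and the Stokes factor picked up on crossing an anti-Stokes direction $d$ is supported exactly on the pairs $(l,j)$ for which $d$ satisfies \eqref{antiStokes}, so that $\exp(Q_l-Q_j)$ decays beyond all orders on the full open $\pi/(k+1)$-sector bisected by $d$ --- which contains the half-period extension of $\mathrm{Sect}_i$ past $d$. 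It is this matching of the width of the decay sector of each Stokes factor with the $\pi/(2(k+1))$ widening that lets the expansion propagate exactly to the supersector boundary and no further; without it your induction over overlaps does not close. This is the substance of the result in \cite{Lutz} and \cite{boalch_thesis}, and as written your proposal defers rather than supplies it.
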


\begin{definition} \cite{boalch_thesis}
We call $\Phi_i$ the \textit{canonical fundamental solution} of \eqref{ode} on Sect$_i$. Note that this depends on a labeling of the anti-Stokes directions and a choice of $\log z$.  
\end{definition}

The number of canonical fundamental solutions is equal to the number of anti-Stokes directions $r$; there is one associated to each supersector.  As above, the index $i$ is to be taken mod $r$.  Thus, in particular we have $\Phi_0 = \Phi_r$.

Next, $\Phi_i$ and $\Phi_{i+1}$ are both fundamental solutions of \eqref{ode} which extend to fundamental solutions on the intersection $\widehat{\mathrm{Sect}_i} \cap \widehat{\mathrm{Sect}_{i+1}}$.  With this, for $1 \leq i \leq r-1$ define
\begin{equation} \label{Stokesfactor}
K_{i+1} = \Phi_{i+1}^{-1}\Phi_{i}
\end{equation}
and
$$
K_1 = \Phi_1^{-1}\Phi_r\exp(-2\pi i \Lambda).
$$
We call $K_i$ the $i^{th}$ \textit{Stokes factor}.
See for example \cite{boalch_thesis} (of course), but also \cite{wasow} section 15, or \cite{Lutz}. 

Next, let $U_+,U_-$ be the upper, respectively, lower triangular unipotent subgroups of $\mathrm{SL}(n,\C)$.  Then we have:

\begin{proposition} \cite{boalch_thesis} \label{Uplusminus}
Choose a labeling of anti-Stokes directions as above and write $r = (2k+2)\ell$ for some positive integer $\ell$.  Then there is a unique permutation matrix $P \in \mathrm{GL}(n,\C)$ such that for $i \geq 1$, the multiplication map
$$
(K_{i\ell}, \dots , K_{(i-1)\ell+1}) \mapsto P^{-1}K_{i\ell}\cdots K_{(i-1)\ell+1}P
$$
is a diffeomorphism onto $U_+$ or $U_-$ depending on whether $i$ is odd or even, respectively.
\end{proposition}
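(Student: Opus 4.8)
\emph{Proof proposal.} The plan is to pin down the shape of a single Stokes factor, then to multiply $\ell$ consecutive ones together, and finally to recognise the product as filling out $U_+$ or $U_-$. For the first step, consider $K_m=\Phi_m^{-1}\Phi_{m-1}$ (for $m\geq 2$; the factor $K_1$ carries an extra $\exp(-2\pi i\Lambda)$ that only compensates for the jump in the branch of $\log z$ after one full turn and is absorbed identically into what follows). Two consecutive supersectors meet in exactly the open arc $O_m=\mathrm{Sect}\big(d_m-\tfrac{\pi}{2(k+1)},\,d_m+\tfrac{\pi}{2(k+1)}\big)$, of angular width $\tfrac{\pi}{k+1}$, one half period. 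On $O_m$ both $\Phi_{m-1}$ and $\Phi_m$ are asymptotic to $\widehat{Y}z^{\Lambda}\exp(Q)$ by Theorem~\ref{birkhoff}, so $\Phi_{m-1}e^{-Q}z^{-\Lambda}$ and $\Phi_m e^{-Q}z^{-\Lambda}$ share the asymptotic series $\widehat{Y}$; writing $K_m=e^{-Q}z^{-\Lambda}(I+C(z))z^{\Lambda}e^{Q}$ one finds $C$ has asymptotic series $0$, so each off-diagonal entry $(K_m)_{ab}$ equals a fixed power of $z$ times $e^{(q_b-q_a)/(k+1)}$ times a quantity decaying faster than any power of $z^{-1}$. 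Since $K_m$ is a \emph{constant} matrix, $(K_m)_{ab}$ with $a\neq b$ must vanish unless $e^{(q_b-q_a)/(k+1)}$ is exponentially large along \emph{every} direction of $O_m$; because $O_m$ has angular width \emph{exactly} $\pi/(k+1)$ this forces $(\lambda_a-\lambda_b)z^{k+1}\in\R_{<0}$ precisely when $\arg z=d_m$, and likewise the diagonal of $K_m$ to be the identity. Hence $K_m$ lies in the unipotent group
$$\mathrm{Sto}_{d_m}=\{I+N: N_{ab}=0\text{ unless }(a,b)\in R_m\},\qquad R_m=\{(a,b):a\neq b,\ (\lambda_a-\lambda_b)e^{i(k+1)d_m}\in\R_{<0}\},$$
which is a group since $(a,b),(b,c)\in R_m$ forces $(a,c)\in R_m$, and is isomorphic as a variety to $\A^{|R_m|}$.

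The second step records how these groups sit over one half period. The invariance of the anti-Stokes configuration under rotation by $\pi/(k+1)$, together with $r=(2k+2)\ell$, gives $d_{m+\ell}=d_m+\pi/(k+1)$, so $R_{m+\ell}$ is $R_m$ with every pair reversed and the directions $d_{(i-1)\ell+1},\dots,d_{i\ell}$ span an arc strictly shorter than $\pi/(k+1)$. It follows that $R_{(i-1)\ell+1},\dots,R_{i\ell}$ are pairwise disjoint, that exactly one of the two orderings of each unordered pair occurs among them, and that $\sum_m|R_m|=\binom{n}{2}$ over any such window. Choose $\theta_i^{\ast}$ in the nonempty open arc $\bigcap_{m=(i-1)\ell+1}^{i\ell}O_m$ at which the numbers $\mathrm{Re}(\lambda_a z^{k+1})$ are all distinct, and let $\prec_i$ be the total order with $a\prec_i b$ when $\mathrm{Re}(\lambda_a z^{k+1})<\mathrm{Re}(\lambda_b z^{k+1})$ there. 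Since $\theta_i^{\ast}\in O_m$ for every $m$ in the window, each pair of $R_m$ is $\prec_i$-increasing, so $\bigsqcup_m R_m$ is exactly the set of $\prec_i$-increasing pairs; and since $\theta_i^{\ast}+\pi/(k+1)$ serves as $\theta_{i+1}^{\ast}$, the order $\prec_{i+1}$ is the reverse of $\prec_i$, hence $\prec_i=\prec_1$ for $i$ odd and its reverse for $i$ even.

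For the third step, take $P$ to be the permutation matrix of $\prec_1$, so that conjugation by $P$ carries $\prec_1$-upper unitriangular matrices onto $U_+$. The product $K_{i\ell}\cdots K_{(i-1)\ell+1}$, being a product of matrices supported on $\prec_i$-increasing pairs, is $\prec_i$-upper unitriangular, so $P^{-1}K_{i\ell}\cdots K_{(i-1)\ell+1}P$ lies in $U_+$ for $i$ odd and, the reversed order turning ``upper'' into ``lower'', in $U_-$ for $i$ even. To see that it sweeps out the whole group, work in the coordinates given by matrix entries: the entry of the product at a pair $(a,b)$ equals the coordinate of the unique factor $K_m$ with $(a,b)\in R_m$, plus a polynomial in the coordinates attached to strictly shorter $\prec_i$-intervals. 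This system is triangular with respect to $\prec_i$-interval length, with all leading coefficients $1$, hence a polynomial automorphism of affine space; since the source has dimension $\sum_m|R_m|=\binom{n}{2}=\dim U_{\pm}$, the multiplication map is a biregular isomorphism onto $U_+$ (resp.\ $U_-$), in particular a diffeomorphism. Finally, if $P'$ is any other admissible permutation matrix, then $P'^{-1}P$ conjugates $U_+$ to itself (by the $i=1$ statement for $P$ and for $P'$), hence normalises $U_+$, and the only permutation matrix in that normaliser is the identity; so $P$ is unique.

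The step I expect to be the main obstacle is the first one, extracting the precise support $R_m$: everything hinges on the overlap of two consecutive supersectors being an arc of width \emph{exactly} $\pi/(k+1)$, which is just wide enough that ``$e^{q_b-q_a}$ dominant throughout $O_m$'' pins $\arg(\lambda_b-\lambda_a)$ down to a single value; making this precise requires careful bookkeeping of the supersector definitions, the possibly coincident anti-Stokes directions, and the branch of $\log z$ entering $K_1$. The rest is comparatively routine, though one should check that the triangular-coordinate argument of the third step also covers the non-generic case where several pairs share an anti-Stokes direction and $\mathrm{Sto}_{d_m}$ has dimension greater than one — it is phrased so as to do so uniformly.
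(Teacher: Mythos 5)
The paper does not prove this proposition; it is quoted from Boalch's thesis (ultimately going back to Balser--Jurkat--Lutz), and your argument reproduces that standard proof: each $K_m$ lies in the unipotent ``Stokes group'' of its anti-Stokes direction because the two supersectors overlap in an arc of width exactly $\pi/(k+1)$, the roots supported over a half-period partition the $\prec$-increasing pairs for a dominance order that reverses each half-period, and the triangular change of coordinates makes the multiplication map a biregular isomorphism onto $U_\pm$. The reasoning, including the uniqueness of $P$ via the normalizer of $U_+$, is correct.
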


\begin{definition} \cite{boalch_thesis}
The $i^{th}$ \textit{Stokes matrix} is the $\ell$-fold product of Stokes factors appearing in Proposition \ref{Uplusminus}:
$$
S_i = K_{i\ell}\cdots K_{(i-1)\ell+1}.
$$
\end{definition}

\begin{lemma}\cite{boalch_thesis} \label{stokesphenomenon}
For $1 \leq i \leq 2(k+1)$ the fundamental solution $\Phi_{i\ell}$ can be analytically continued to Sect$_{(i+1)\ell}$ and in that sector we have
$$
\Phi_{i\ell}= \Phi_{(i+1)\ell}S_{i+1}, 
$$
unless $i = 2(k+1)$ in which case we have $\Phi_{(2k+2)\ell} = \Phi_{\ell}S_1\exp(2\pi i \Lambda)$.  Moreover, the monodromy of the system \eqref{ode} around a simple closed loop about $z=\infty$ is conjugate to the product
$$
S_{2k+2}S_{2k+1}\cdots S_2S_1\exp(2 \pi i \Lambda).
$$
\end{lemma}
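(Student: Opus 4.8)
The plan is to unwind the definitions, telescope the Stokes factors into Stokes matrices, and then chase a single canonical solution once around the pole. The basic tool is the overlap identity $\Phi_{j-1}=\Phi_jK_j$, valid on the intersection of the supersectors carrying $\Phi_{j-1}$ and $\Phi_j$: for $2\le j\le r$ this is \eqref{Stokesfactor} rearranged, while for $j=1$ it takes the form $\Phi_r=\Phi_1K_1\exp(2\pi i\Lambda)$, the extra factor recording the $2\pi i$ shift in $\log z$ picked up when $\Phi_r$ is continued counterclockwise across the branch cut of the formal solution into the domain of $\Phi_1$. Since \eqref{ode} has no finite singularities, each $\Phi_j$ extends as a solution along any arc avoiding the pole, so I may iterate these identities freely.

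First I would treat the non-exceptional case. For $1\le i\le 2k+1$ every Stokes factor appearing in $S_{i+1}$ has index among $i\ell+1,\dots,(i+1)\ell$, and these all lie in $\{2,\dots,r\}$ since $i\ell+1\ge\ell+1\ge 2$ and $(i+1)\ell\le(2k+2)\ell=r$; thus the exceptional factor $K_1$ never intervenes, and
$$\Phi_{i\ell}=\Phi_{i\ell+1}K_{i\ell+1}=\Phi_{i\ell+2}K_{i\ell+2}K_{i\ell+1}=\cdots=\Phi_{(i+1)\ell}K_{(i+1)\ell}\cdots K_{i\ell+1}=\Phi_{(i+1)\ell}S_{i+1},$$
the last step being the definition of $S_{i+1}$ from Proposition~\ref{Uplusminus}. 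This shows at once that $\Phi_{i\ell}$ continues to $\mathrm{Sect}_{(i+1)\ell}$ and that the claimed identity holds there. For the exceptional case $i=2(k+1)$ I would run the same iteration but cross the branch cut once, using $\Phi_r=\Phi_1K_1\exp(2\pi i\Lambda)$ at the first step:
$$\Phi_r=\Phi_1K_1\exp(2\pi i\Lambda)=\Phi_2K_2K_1\exp(2\pi i\Lambda)=\cdots=\Phi_\ell K_\ell\cdots K_2K_1\exp(2\pi i\Lambda)=\Phi_\ell S_1\exp(2\pi i\Lambda),$$
where $\Phi_\ell$ is the canonical solution reached one $\ell$-block past $\mathrm{Sect}_r$, i.e. on the next sheet; since $r=(2k+2)\ell$, this is exactly the asserted identity $\Phi_{(2k+2)\ell}=\Phi_\ell S_1\exp(2\pi i\Lambda)$.

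To obtain the monodromy I would continue the fundamental solution $\Phi_r=\Phi_{(2k+2)\ell}$ once counterclockwise around $z=\infty$, composing the $2(k+1)$ identities just established in order: the exceptional one carries $\Phi_r$ to $\Phi_\ell S_1\exp(2\pi i\Lambda)$, and then the identities for $i=1,2,\dots,2k+1$ carry it successively (the already-accumulated Stokes matrices being constant) to $\Phi_{2\ell}S_2S_1\exp(2\pi i\Lambda)$, then $\Phi_{3\ell}S_3S_2S_1\exp(2\pi i\Lambda)$, and so on, until after $2(k+1)$ blocks --- a full turn, because $(2k+2)\ell=r$ --- it returns to $\mathrm{Sect}_r$ as $\Phi_r\cdot S_{2k+2}S_{2k+1}\cdots S_2S_1\exp(2\pi i\Lambda)$. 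Hence, relative to the fundamental solution $\Phi_r$, the monodromy of \eqref{ode} about a simple counterclockwise loop around $\infty$ is represented by $S_{2k+2}S_{2k+1}\cdots S_2S_1\exp(2\pi i\Lambda)$, and relative to any other choice it is conjugate to this matrix, which is the assertion. I expect the one genuinely delicate point to be the branch-cut bookkeeping: I must fix a single branch of $\log z$ for all of $\Phi_1,\dots,\Phi_r$, propagate the $\exp(\pm 2\pi i\Lambda)$ discrepancy produced by crossing the cut --- which is exactly what distinguishes $K_1$ from the other Stokes factors --- and check that it ends up on the correct side of the final product. Everything else is the telescoping above, together with the nonemptiness of consecutive supersector overlaps that makes the chain of analytic continuations well defined.
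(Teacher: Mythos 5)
The paper does not prove this lemma---it is quoted verbatim from \cite{boalch_thesis}---so there is no in-paper argument to compare against; your proof is the standard one and is correct: the identities $\Phi_{j-1}=\Phi_j K_j$ (and $\Phi_r=\Phi_1K_1\exp(2\pi i\Lambda)$) are exact rearrangements of \eqref{Stokesfactor} and the definition of $K_1$, the index check $i\ell+1\ge 2$, $(i+1)\ell\le r$ correctly isolates the exceptional factor, and the telescoping reproduces $S_{i+1}=K_{(i+1)\ell}\cdots K_{i\ell+1}$ exactly as defined. The only point you flag but do not fully pin down---fixing one branch of $\log z$ for all $\Phi_j$ and verifying that the resulting $\exp(2\pi i\Lambda)$ lands on the right of the product when the cut is crossed---is genuinely the delicate part, but your placement is consistent with the paper's definition of $K_1$, so the argument stands.
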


\

The behavior described in Lemma \ref{stokesphenomenon} is what is usually referred to as the \textit{Stokes phenomenon}.

In summary, given the differential equation \eqref{ode} where we assume that the matrix valued function $A(z)$ has germ at a pole with most singular term $A_0$, a diagonal matrix with distinct eigenvalues, we have the associated monodromy data consisting of tuples of Stokes matrices $(S_1,S_2,\dots, S_{2k+2})$, where $P^{-1}S_iP \in U_{\pm}$ for some permutation matrix $P$, and the exponent of formal monodromy $\Lambda$. In the special case of a semi-simple irregular connection on a vector bundle on the Riemann sphere with only a single pole, the Stokes matrices along with the exponent of formal monodromy are the only monodromy data we need to consider\footnote{One should be careful as there is little consistency in terminology across the literature.  The terms Stokes matrix, Stokes factor, Stokes multiplier, etc.~are all used in different contexts and often refer to similar but different constructions.  We have chosen to follow \cite{boalch_thesis}, as it gives a very comprehensive and modern treatment of the many perspectives on the Stokes phenomenon.  In the case of multiple poles, we have the Stokes matrices and exponent of formal monodromy, as defined above, at each pole plus a set of connection matrices relating fundamental solutions at different poles.  }.
 As the monodromy about a contractible loop must be equal to the identity, we obtain the following restriction on the data which we consider:
\begin{equation} \label{monodromyidentity}
S_{2(k+1)} \cdots \, S_1\exp(2 \pi i \, \Lambda) = I_n.
\end{equation}
Also, it follows from the residue theorem for Riemann surfaces that 
\begin{equation} \label{trace0}
0= \mathrm{Tr}(\Lambda).
\end{equation}

\section{The Monodromy Map and Isomonodromic Deformations}
\label{jimbo section}

In this section we restrict our attention to irregular connections on a trivial vector bundle over the Riemann sphere, which we identify with the complex projective line $\C\mathbb{P}^1$.  
We fix a trivial rank $n$ vector bundle $V$ on $\C\mathbb{P}^1$ and an integer $k \geq 0$.  We define a \textit{marked triple} as a tuple $(\nabla,f,v)$ consisting of a semisimple irregular connection $\nabla$ on $V$ with a single pole at $\infty$, which we assume is an irregular singularity of Poincar\'e rank $k+1$, a compatible framing $f$ of $\nabla$ at $\infty$, and a non-zero vector $v \in T_{\infty}\C\mathbb{P}^1$ which is not an anti-Stokes direction\footnote{Previously we defined an anti-Stokes direction as an element of the circle $S^1$.  Here, when we say that a non-zero tangent vector $v$ is not an anti-Stokes direction, what we mean is that the associated element $v/|v|$ of $S^1$ is not an anti-Stokes direction.}. With the choice of $v$ we can order the Stokes sectors, as above, so that $v$ is interior to one of the Stokes sectors and subsequent sectors are met by counterclockwise rotation about the pole.   This induces an ordering on the Stokes matrices.  By the conventions of Section 2, $v$ will belong to the last Stokes sector $\mathrm{Sect}_r=\mathrm{Sect}_0$.

Define the \textit{deRham moduli space} $\mathcal{M}_{dR}$ as the set of equivalence classes of marked triples under the following equivalence relation. First, if $v$ and $v'$ are interior to the same Stokes sector then we identify the triples $(\nabla,f, v)$ and $(\nabla,f, v')$.  Second, if $g$ is a gauge transformation for $V$, i.e. a fiber preserving automorphism of $V$, or equivalently, an automorphism of $V$ over the identity on $\C\mathbb{P}^1$, then $g$ acts on a marked triple by
$$
g.(\nabla,f,v) = (g[\nabla],gf,v)
$$
where
$g[\nabla]s = g\nabla(g^{-1}s)$ for a section $s$ of $V$.   $\mathcal{M}_{dR}$ is the set of equivalence classes of marked triples under this action by holomorphic $g$.

A point $[(\nabla,f,v)] \in \mathcal{M}_{dR}$ determines a tuple of monodromy data $$(S_1,\dots,S_{2k+2}, \Lambda)$$ satisfying \eqref{monodromyidentity} and \eqref{trace0} where the ordering of the Stokes matrices is determined by the choice of $v$.  For ease of notation, we will sometimes write simply $\nabla$ for a point in $\mathcal{M}_{dR}$, identifying an equivalence class with a chosen representative and  suppressing the framing $f$ and the vector $v$.

Now, we define the space of Stokes data as 
$$
\mathcal{B} = \left\{(S_1, \dots, S_{2k+2}) \, \Big| \, S_{2k+2} \cdots S_1 \in T\right\}
$$
where $T \subset \mathrm{SL}(n,\C)$ is the set of $n \times n$ diagonal matrices of determinant 1.
Then we have a map, called the \textit{monodromy map},
$$
\nu: \mathcal{M}_{dR} \to \mathcal{B},
$$
which is a holomorphic map of complex manifolds taking a connection to its Stokes data.

It is important to note at this point that the monodromy map descends to a map on the quotient of $\mathcal{M}_{dR}$ by the group $\mathrm{Aut}(V,\infty)$ of automorphisms of $V$ over automorphisms of $\C\mathbb{P}^1$ which fix $\infty$, acting on $\mathcal{M}_{dR}$ as follows.  If $\Psi$ is an automorphism of $V$ over $\psi$, where $\psi \in \mathrm{Aut}(\C\mathbb{P}^1)$ satisfies $\psi(\infty)=\infty$, then we define
$$
\Psi.[(\nabla,f,v)]=[(\Psi[\nabla],\Psi f, d\psi_{\infty}v)],
$$
where $\Psi[\nabla]s = \Psi\nabla(\Psi^{-1}s)$ for a section $s$ of $V$.  If we let
$$
\mathcal{M}_{dR}' = \mathcal{M}_{dR}/\mathrm{Aut}(V,\infty),
$$
then we can define
$$
\nu: \mathcal{M}_{dR}' \to \mathcal{B}
$$
exactly as above.  To see that this makes sense, note that under the action of $\Psi$  the canonical solutions $\Phi_i$ for $\nabla$ are mapped to $\Psi\Phi_i$ and so it follows from \eqref{Stokesfactor} that the Stokes matrices are left invariant.  Moreover, $d\psi_{\infty}$ simply rotates the anti-Stokes directions, leaving the ordering of the Stokes matrices invariant. Thus, the map $\nu$ descends to a well defined map on $\mathcal{M}_{dR}'$.
We will return to this point in section 4.

Furthermore, if we let $X$ be the space of irregular types of semisimple irregular connections on $V$ with an irregular singularity of Poincar\'e  rank $k+1$ at $\infty$, then $\mathcal{M}_{dR}$ has the structure of a flat fiber bundle over $X$.  The restriction of $\nu$ to a fiber is a submersion and biholomorphism onto its image in $\mathcal{B}$ (see \cite{boalch_thesis}, Corollary 4.13). 
This fact is one of a number of similar theorems commonly known as the irregular Riemann-Hilbert correspondence (sometimes also called the Riemann-Hilbert-Birkhoff correspondence).

\begin{definition}
We call a submanifold $N \subset \mathcal{M}_{dR}$ \textit{isomonodromic} if $\nu$ restricted to $N$ is locally constant.  Or, equivalently, $N$ is isomonodromic if it is tangent to the distribution $\ker d\nu$.
\end{definition}

The goal of this paper is to prove that the monodromy map $\nu$, restricted to a particular family of irregular connections on a trivial bundle on $\C\mathbb{P}^1$ with a single pole, is an immersion.  To do this we apply a result of Jimbo, Miwa, and Ueno from \cite{jimbo} which gives a criterion that a family of irregular connections on $\C\mathbb{P}^1$ be isomonodromic.  Specifically, they prove that a family being isomonodromic is equivalent to the existence of a certain rational solution $\chi \in  H^0(\mathrm{End}(V))$ to the differential equation
\begin{equation} 
\frac{\partial}{\partial z}\chi = \frac{\partial}{\partial t}A + [A, \chi]
\end{equation}
where $A(t,z)$ is a family of rational matrix valued functions of $z$ varying holomorphically in a parameter $t \in X$, the space of irregular types.  To prove Theorem \ref{immersion}, we analyze this equation evaluated at a point (replacing $\frac{\partial}{\partial t}A$ with $\dot{A}\in T_\nabla\mathcal{M}_{dR}$ for some fixed $\nabla$).  

We take a brief aside now to discuss the notion of \textit{infinitesimally isomonodromic} families in the general setting.  Consider the trivial vector bundle $\C \times \C^n$ over the complex plane and let $\mathcal{A}$ be the space of flat connections.  We define a map $\Omega:T\mathcal{A} \to H^0(\mathrm{End}(\C^n))$ as follows.  Given $(\nabla_0,\dot{A}) \in T\mathcal{A}$, consider a smooth family of connections $\nabla_t: (-\varepsilon, \varepsilon) \to \mathcal{A}$ with velocity vector $\dot{A}$, i.e.
$$
\left.\frac{\partial}{\partial t}\nabla_t\right|_{t=0} = \dot{A}.
$$
Let $Y_t:\C \to GL(n,\C)$ be a smoothly varying family of $\nabla_t$-trivializing gauge transformations (i.e. fundamental solutions), uniquely determined for each $t$ by imposing the initial conditions $Y_t(1) = M_t$, and define
$$
\Omega(\nabla_0,\dot{A}, M_0,\dot{M}) = \dot{Y}Y_0^{-1}
$$
where $\dot{Y}= \left.\frac{\partial}{\partial t}Y_t \right|_{t=0}$ and similarly for $\dot{M}$.  Write $\nabla_0 = d + A(z)dz$.  Then $\Omega(\nabla_0, \dot{A},M_0,\dot{M})$ is the unique solution to the linear ordinary differential equation
\begin{equation} \label{inf_jmu}
\frac{\partial}{\partial z}\chi = \dot{A} + [A,\chi]
\end{equation}
with initial condition $\chi(1) = \dot{M}M_0^{-1}$.  
By construction, $\Omega(\nabla_0,\dot{A},M_0,\dot{M})$ is smooth wherever $\nabla_0$ is smooth.  Note that 
$$\Omega(\nabla_0,\dot{A},M_0,\dot{M}) = \Omega(\nabla_0,\dot{A},I_n,\dot{M}M_0^{-1}).$$   Thus, to simplify notation, write $\Omega(\nabla_0,\dot{A},M) = \Omega(\nabla_0,\dot{A},I_n,M)$ for the unique solution to \eqref{inf_jmu} with initial condition $\chi(1)=M$.

\begin{theorem}\cite{jimbo} \label{jimbo}  Let $\nabla_0 \in \mathcal{M}_{dR}$ and $\dot{A} \in T_{\nabla_0}\mathcal{M}_{dR}$ be given. Assume that the exponent of formal monodromy $\Lambda_0$ of $\nabla_0$ is constant to first order in the direction $\dot{A}$.  Then, the Stokes matrices of $\nabla_0$ at $ \infty$ are constant to first order in the direction $\dot{A}$ or, equivalently, $\dot{A} \in \ker d_{\nabla_0} \nu$ if and only if  there exists an $M$ such that $\Omega(\nabla_0,\dot{A},M)$ has only a pole at $\infty$.
\end{theorem}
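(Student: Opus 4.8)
The plan is to realize the statement as a first-order (linearized) version of the Jimbo--Miwa--Ueno isomonodromy criterion, using the explicit description of the Stokes matrices in terms of the canonical sectorial solutions $\Phi_i$ from Section 2. Fix $\nabla_0 = d - A(z)\,dz$ and a deformation direction $\dot{A}$, and let $\nabla_t$ be a smooth family with $\left.\tfrac{\partial}{\partial t}\nabla_t\right|_{t=0} = \dot{A}$ and constant irregular type $Q$ (possible since, by hypothesis, $\Lambda_0$ and hence the leading data are constant to first order; one may modify the family off the pole so that $Q$ and $\Lambda$ are genuinely constant in $t$, which does not affect tangency to $\ker d\nu$). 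For each $t$ and each Stokes sector, Theorem \ref{birkhoff} and the subsequent proposition furnish a canonical solution $\Phi_i(z,t)$ with the prescribed asymptotics $\Phi_i \sim \widehat{Y}(z,t)z^{\Lambda}\exp(Q)$ on the supersector $\widehat{\mathrm{Sect}_i}$. The Stokes matrices are $S_i = K_{i\ell}\cdots K_{(i-1)\ell+1}$ with $K_{j+1} = \Phi_{j+1}^{-1}\Phi_j$, so $\dot{S}_i = 0$ for all $i$ is equivalent to $\dot{K}_j = 0$ for all $j$, i.e. to $\Phi_{j+1}^{-1}\dot\Phi_{j+1}$ and $\Phi_j^{-1}\dot\Phi_j$ agreeing on each sector overlap.

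\medskip

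The key step is to show that $\chi_i := \dot\Phi_i\,\Phi_i^{-1}$, a priori defined and holomorphic on $\mathrm{Sect}_i$, is the candidate global solution $\Omega$ when the Stokes data are frozen. Differentiating \eqref{ode} in $t$ gives that each $\chi_i$ solves $\tfrac{\partial}{\partial z}\chi = \dot{A} + [A,\chi]$ on its sector — this is exactly \eqref{inf_jmu}, and it is the content behind the definition of $\Omega(\nabla_0,\dot{A},M)$ given before the theorem. For the forward direction: if $\dot S_i = 0$ for all $i$, then $K_j^{-1}\dot\Phi_j = \dot\Phi_{j+1}$ up to the relation $\dot K_{j+1} = 0$, which forces $\chi_i = \chi_{i+1}$ on overlaps; hence the $\chi_i$ glue to a single holomorphic matrix function $\Omega$ on a full punctured neighborhood of $\infty$, and in fact on all of $\C$ (the $\chi_i$ already extend across the finite regular points since $\dot A$ is a polynomial deformation). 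One then shows $\Omega$ extends meromorphically across $\infty$ with a pole there only: from $\Phi_i \sim \widehat{Y}z^{\Lambda}\exp(Q)$ and $Q,\Lambda$ constant in $t$, one gets $\chi_i = \dot\Phi_i\Phi_i^{-1} \sim \dot{\widehat{Y}}\,\widehat{Y}^{-1}$ asymptotically, a formal series in $z^{-1}$ with no exponential or $z^{\Lambda}$ factors; combined with \eqref{inf_jmu} and the polynomial growth of $A$, this bounds $\Omega$ by a power of $z$ near $\infty$, so $\Omega \in H^0(\mathrm{End}(V)(*\infty))$ with its only pole at $\infty$. Taking $M = \Omega(1)$ identifies this $\Omega$ with $\Omega(\nabla_0,\dot{A},M)$ by uniqueness of solutions to the linear ODE \eqref{inf_jmu}.

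\medskip

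For the converse: suppose $\Omega := \Omega(\nabla_0,\dot{A},M)$ has a pole only at $\infty$. Then on each Stokes sector define $\Psi_i(z) := \left(\tfrac{d}{dt}\big|_0\right)$ of the solution obtained by integrating $\Omega$ against $\Phi_i$; concretely, $\Psi_i := \Omega\,\Phi_i - \dot\Phi_i$ satisfies $\tfrac{d}{dz}(\Phi_i^{-1}\Psi_i) = 0$, so $\Phi_i^{-1}\Psi_i = C_i$ is a constant matrix on $\mathrm{Sect}_i$. Using the asymptotics of $\Phi_i$ on the supersector and the fact that $\Omega$ has controlled (polynomial) behavior at $\infty$ with no exponential part, one argues that the $C_i$ are in fact independent of $i$ — the point is that $\Omega$ being single-valued near $\infty$ with a pole of finite order propagates across every anti-Stokes ray, whereas a jump $C_{i+1} - C_i$ would be detected by the exponential factors $\exp(Q)$ in the asymptotics (this is the usual ``Stokes rays are where exponentials switch dominance'' argument). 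Constancy of the $C_i$ then says $\dot\Phi_i = \Omega\Phi_i - \Phi_i C$ with the \emph{same} $C$ for all $i$, and feeding this into $\dot K_{j+1} = \tfrac{d}{dt}(\Phi_{j+1}^{-1}\Phi_j)$ gives $\dot K_{j+1} = -C K_{j+1} + K_{j+1} C$... and one more normalization (absorbing $C$, or noting $C$ is forced to be diagonal by compatibility with $\Lambda$ and the formal monodromy) kills the bracket, yielding $\dot S_i = 0$.

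\medskip

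The main obstacle I anticipate is the step identifying the growth of $\Omega$ at $\infty$ with the asymptotic behavior of $\dot{\widehat Y}\widehat Y^{-1}$, and symmetrically, in the converse, showing that a globally meromorphic $\Omega$ cannot have ``hidden'' Stokes jumps — i.e. that the constants $C_i$ coincide across all supersectors. This requires carefully tracking how the factor $z^{\Lambda}\exp(Q)$ in $\Phi_i$ interacts with differentiation in $t$ and with the overlaps of consecutive supersectors, and is precisely where the hypothesis that $\Lambda_0$ is constant to first order is used. Everything else is formal manipulation of \eqref{inf_jmu} plus uniqueness for linear ODEs. I would model this part closely on the original argument in \cite{jimbo}, specialized to the single-pole-on-$\C\mathbb{P}^1$ case where there are no connection matrices to worry about.
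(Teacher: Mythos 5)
The paper does not actually prove this statement: Theorem \ref{jimbo} is imported verbatim from Jimbo--Miwa--Ueno \cite{jimbo} and is used as a black box (its only role here is to feed into Proposition \ref{oper jmu}). So there is no in-paper proof to compare against, and your proposal has to be judged against the standard argument. Your skeleton is the right one --- linearize $\partial_z \Phi = A\Phi$ in $t$ to see that each $\chi_i = \dot\Phi_i\Phi_i^{-1}$ solves \eqref{inf_jmu} on its sector, glue when the Stokes factors are frozen, and in the converse use that $\Psi_i = \Omega\Phi_i - \dot\Phi_i$ solves the homogeneous equation so that $\Phi_i^{-1}\Psi_i = C_i$ is constant.

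Two steps need repair. First, you cannot normalize the family so that $Q$ is constant in $t$: the hypothesis is only that the \emph{exponent of formal monodromy} $\Lambda_0$ (the residue term) is constant to first order, and a general $\dot{A} \in T_{\nabla_0}\mathcal{M}_{dR}$ genuinely moves the irregular type. This is not a cosmetic point --- the term $\widehat{Y}\,\dot{Q}\,\widehat{Y}^{-1}$ in the expansion of $\chi_i$ is precisely what produces the pole of $\Omega$ at $\infty$ (of order up to $k+1$); if $\dot{Q}$ really could be set to zero, your own asymptotics $\chi_i \sim \dot{\widehat{Y}}\widehat{Y}^{-1}$ would make $\Omega$ bounded, hence constant by Liouville, which is visibly not what the theorem asserts. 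Second, in the converse your endgame does not close: the supersector argument does force the $C_i$ to be a common \emph{diagonal} constant $C$, but diagonality does not kill $[C,K_{j+1}]$, since the $K_j$ are unipotent, not diagonal; one gets $\dot{S}_i = [C,S_i]$, which is the infinitesimal torus action on $\mathcal{B}$ and is generically nonzero. The fix is exactly the ``there exists an $M$'' in the statement: because $C$ is diagonal, $\Phi C\Phi^{-1} \sim \widehat{Y}C\widehat{Y}^{-1}$ is a homogeneous solution of \eqref{inf_jmu} with no exponential growth, so replacing $\Omega$ by $\Omega - \Phi C\Phi^{-1}$ (equivalently, adjusting the initial value $M$) preserves polynomiality and resets all $C_i$ to $0$, after which $\dot{K}_j = 0$ and hence $\dot{S}_i = 0$. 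With those two corrections your outline matches the argument of \cite{jimbo} specialized to a single pole.
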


\section{Monodromy of Meromorphic Cyclic Opers} \label{oper section}

\begin{definition} \label{oper definition}  An
$\mathrm{SL}(n,\C)$-\textit{oper} on a Riemann surface $X$ is a triple $(V,\mathcal{F},\nabla)$ consisting of a holomorphic bundle $V$, a filtration $\mathcal{F} = \{V_i\}$ of $V$, $0 = V_0 \subset V_1 \subset \dots \subset V_{n-1} \subset V_n=V$, and a holomorphic connection $\nabla$ inducing the trivial connection on $\det V$ such that
\begin{itemize}
\item[i)] $\nabla(V_i) \subset V_{i+1}\otimes K$
\item[ii)] for $1 \leq i \leq n-1$ there is an isomorphism $V_{i}/V_{i-1} \to (V_{i+1}/V_i) \otimes K$ induced by $\nabla$.
\end{itemize}
Given a vector bundle $V$, a connection and filtration satisfying (i) and (ii) is called an \textit{oper structure}.
\end{definition}

Given a coordinate chart $z$, an $\mathrm{SL}(n,\C)$-oper connection is gauge equivalent to a unique connection of the form:
\begin{equation} \label{oper ode}
d - \left(\begin{matrix}
0 & 1 & 0 & & & &  \\
& 0 & 1 & 0 & & & \\
& &  \ddots & & &  \\
&  & &  0 &  1 & 0 \\
&  & &   & 0 & 1 \\
Q_n & Q_{n-1} &  \dots &  & Q_2 & 0
\end{matrix}\right)dz
\end{equation}
where $Q_j$ is a holomorphic function (see \cite{ben-zvi}).  

\begin{definition}
A \textit{cyclic} $\mathrm{SL}(n,\C)$-\textit{oper} is an $\mathrm{SL}(n,\C)$-oper that in any local coordinate $z$ corresponds to a connection of the form \eqref{oper ode}  with $Q_{n-1}= \dots = Q_2 =0$.  That is, one with connection
\begin{equation} \label{cyclic oper ode}
d - \left(\begin{matrix}
0 & 1 & 0 & & & &  \\
& 0 & 1 & 0 & & & \\
& &  \ddots & & &  \\
&  &   & 0 &  1 & 0 \\
 &   &  & & 0 & 1 \\
Q &  0  & \dots & &  & 0
\end{matrix}\right) dz. 
\end{equation}
\end{definition}
This definition first appeared in \cite{acosta} and was motivated by the definition of cyclic Higgs bundles (which we will not discuss here).  We will note however, following \cite{acosta}, that there is a bijective correspondence between $\mathrm{SL}(n,\C)$-opers and the Hitchin base
$$
\mathcal{H}_n = \bigoplus_{i=2}^n H^0(X,K^i).
$$
(See \cite{wentworth}).  Cyclic $\mathrm{SL}(n,\C)$-opers correspond to tuples of the form 
$$(0,\dots,0,\phi_n) \in \mathcal{H}_n$$ where $\phi_n = Qdz^n$ in the coordinate $z$ and $Q$ is as in \eqref{cyclic oper ode}. In particular, the function $Q$ transforms as an $n$-differential under change of coordinates.

We wish to study \textit{meromorphic} cyclic $\mathrm{SL}(n,\C)$-opers on $\C\mathbb{P}^1$. We define a meromorphic oper by replacing the bundle $K$ with $K(D)$, for some effective divisor $D$, in definition \ref{oper definition}.  It turns out that the bundle $V$ is uniquely determined up to isomorphism by the condition that it admits an oper structure and that $V = J^{n-1}(K^{\frac{1-n}{2}})$, the bundle of $n-1$ jets of sections of the bundle $K^{\frac{1-n}{2}}$.  This requires a choice of square root, or \textit{spin structure}, of the canonical bundle (see \cite{ben-zvi}).  On the Riemann sphere, there is a unique choice of spin structure, and in particular $K^{1/2} = \mathcal{O}(-1)$.  Moreover, one can show that $J^{n-1}(K^{\frac{1-n}{2}})$ is trivial on $\C\mathbb{P}^1$.

In this paper, we consider the case of meromorphic cyclic $\mathrm{SL}(n,\C)$-opers on $\C\mathbb{P}^1$ with only a single pole.  Choosing a coordinate so that the pole corresponds to the point at infinity, such an oper connection corresponds to a choice of polynomial $n$-differential.  That is, we consider $Q = p$ where $p$ is a polynomial of degree $d$.

The system of ordinary differential equations given by \eqref{cyclic oper ode} then corresponds
to the $n^{th}$ order differential equation mentioned in the introduction:
\begin{equation} \label{poly ode}
y^{(n)} - py=0.
\end{equation}
As previously noted, in the case $n=2$, \eqref{poly ode} is known as the Schwarzian equation and has been extensively studied.  If $p(z)=z$ then equation \eqref{poly ode} is the Airy equation; while in the case $p(z) = z^2 + c$, it is the Hermite-Weber equation (see \cite{wasow}).  In \cite{sibuya}, Y. Sibuya gives a comprehensive treatment of equation \eqref{poly ode} in the case $n=2$ and for $p$ an arbitrary polynomial with a particular emphasis on asymptotic analysis and the Stokes phenomenon.  

From now on, we assume that $d = kn$ for some positive integer $k$. Then the oper connection given by \eqref{cyclic oper ode}, with $Q=p$ a polynomial of degree $d$, is gauge equivalent, via the diagonal gauge transformation 
\begin{equation} \label{gauge}
g = \mathrm{diag}\left(z^{(n-j)k} \mid 0 \leq j \leq n-1\right),
\end{equation}
to one of the form 
\begin{eqnarray} 
\nabla_0 = g[\nabla] = d - z^{k}\left(\sum_{j=0}^\infty A_jz^{-j}\right)dz, \label{semisimple oper at 0} 
\end{eqnarray}
where $A_0$ has distinct eigenvalues.  That is, $\nabla_0$ is a semi-simple irregular connection on the oper bundle $V=J^{n-1}(K^{\frac{1-n}{2}})$. Note that the gauge transformation represented by formula \eqref{gauge} is meromorphic on the Riemann sphere and only depends on the degree $d$ of the polynomial $p$.  In particular, $g$ does not depend on the coefficients of $p$.  Furthermore, we see that the assumption $d=kn$ is crucial here.  For, without this assumption formula \eqref{gauge} would define a multi-valued function and formula \eqref{semisimple oper at 0} would not describe a meromorphic connection on the Riemann sphere.

We define a \textit{marked cyclic oper} to be a triple $(\nabla,f,v)$ where $\nabla$ is a cyclic oper connection and $(\nabla_0,f,v)$ is a marked triple, i.e.~$f$ is a compatible framing for $\nabla_0$ at $\infty$ and $v$ is not an anti-Stokes direction for $\nabla_0$.  We then define the Stokes data of $(\nabla,f,v)$ as the Stokes data given by $[(\nabla_0,f,v)] \in \mathcal{M}_{dR}$. 

Next, we note that we can change coordinates by an affine transformation $z \mapsto az+b$ for $a,b \in \C, a \neq 0$ so that $p$ becomes monic and trace zero, i.e. for some coefficients $c_0, c_1, \dots, c_{d-2} \in \C$ we can write
$$
p(z) = z^d + c_{d-2}z^{d-2} + \dots + c_1z + c_0.
$$
In fact, there are exactly $d+n$ choices of $a\in \C^*$ for such a transformation, differing from one another by multiplication of $a$ by $(d+n)^{th}$ roots of unity. We denote by $\mathrm{Aut}(\C)$ the group of affine transformations of the plane  and we identify this with the group of automorphisms of $\C\mathbb{P}^1$ fixing the pole at infinity.  

Applying such a transformation, the coefficient of the most singular term in \eqref{semisimple oper at 0} is
$$
A_0 = \left(
\begin{matrix}
0 & 1 & 0 &   & \dots & 0 \\
0 & 0 & 1 & 0 &  \dots & 0 \\
\vdots  &   & \ddots & &&  \\
& & & 0 & 1 & 0 \\
0 &   & &        & 0  & 1   \\
1 & 0 & & \dots &      & 0
\end{matrix}
\right),
$$
which has eigenvalues $\lambda_j = \zeta^j$ for $0 \leq j \leq n-1$ where $\zeta$ is some primitive $n^{th}$ root of unity. Using this normalization, we can compute the anti-Stokes directions for the connection $\nabla_0$.  All monic polynomials give the same Stokes sectors at infinity, which are the sectors
$$
\left\{\frac{(2j-1)}{2n(k+1)}\pi < \arg\left(z\right) < \frac{(2j+1)}{2n(k+1)}\pi \, \Big| \, j \in \Z \right\}
$$
if $n$ is odd, and
$$
\left\{\frac{j}{n(k+1)}\pi < \arg\left(z\right) < \frac{(j+1)}{n(k+1)}\pi \, \Big| \,  j \in \Z \right\}
$$
if $n$ is even.  The thing to notice is that for $n$ odd, one Stokes sector is always symmetric about the positive real axis, while if $n$ is even then the positive real axis lies along an anti-Stokes direction.  

With this observation, let us set
$$
\mathrm{Sect}_0 = \left\{-\frac{\pi}{2n(k+1)} < \arg\left(z\right) < \frac{\pi}{2n(k+1)}  \right\}
$$
if $n$ is odd, and
$$
\mathrm{Sect}_0 = \left\{0 < \arg\left(z\right) < \frac{\pi}{n(k+1)}  \right\}
$$
if $n$ is even.  Then we have $\mathrm{Sect}_j = e^{\frac{\pi \sqrt{-1} j}{n(k+1)}} \mathrm{Sect}_0$ for $0 \leq j \leq 2n(k+1)-1$.
This choice corresponds to the marking $(\nabla,f,v_0)$ where $\nabla$ is a cyclic oper connection corresponding to a monic polynomial $n$-differential of degree $d = kn$ and the direction given by $v_0$ is $e^{\frac{\pi \sqrt{-1}}{4n(k+1)}}$, for example. We call this choice of labeling of the Stokes sectors the \textit{canonical normalization} of the oper connection $\nabla$.

Having specified a marked cyclic oper $(\nabla,f, v)$, there is only one value of $a \in \C^*$ so that the action of the transformation $z \mapsto az+b$ on $(\nabla,f, v)$ is equivalent to $(\nabla,f, v_0)$ (in the sense that $av$ and $v_0$ both belong to $\mathrm{Sect}_0$).  Thus, given a meromorphic cyclic oper corresponding to an $n$-differential $pdz^n$ where, $p$ is a polynomial of degree $d = kn$, there is a unique element of the $\mathrm{Aut}(\C)$ orbit which gives a canonical normalization.  This gives a bijection between the $\mathrm{Aut}(\C)$ equivalence classes of marked cyclic opers $(\nabla, f, v)$, where $\nabla$ corresponds to a monic, trace zero polynomial $n$-differential of degree $d = kn$, and the set of canonically normalized triples $(\nabla, f,v_0)$, where $v_0$ is chosen as above.  Fix a matrix $f_0 \in \mathrm{GL}(n,\mathbb{C})$ diagonalizing $A_0$ (this choice induces a canonical compatible framing for the connection $\nabla_0$) and denote by $\mathcal{P}_{n,d}$ the set of canonically normalized triples $(\nabla, f_0, v_0)$.  Then  we have a map, which we will refer to as the monodromy map for cyclic opers and denote by the same symbol $\nu$ as the monodromy map for semi-simple irregular connections,
$$
\nu: \mathcal{P}_{n,d} \to \mathcal{B}
$$
defined by $\nu(\nabla,f_0,v_0) = \nu([\nabla_0,f_0,v_0])$, where the right hand side is as defined in Section \ref{jimbo section}. 

Recall, that $\nu$ descends to a map on the space $\mathcal{M}_{dR}'$ of equivalence classes of marked triples under the action of the group of bundle automorphisms over automorphisms of $\C\mathbb{P}^1$ which fix the point at infinity.  The set $\mathcal{P}_{n,d}$ is a convenient slice for the subset of $\mathcal{M}_{dR}'$ represented by cyclic opers because it has a natural affine structure modeled on the space of all polynomials of degree $d-2$.  (Recall that we have assumed that $d=kn$ and so, in particular, we have $d \geq 2$.)  We will henceforth simply denote a point in $\mathcal{P}_{n,d}$ by $\nabla$ and the corresponding Stokes data by $\nu(\nabla)$.  

Thus, fixing a connection $\nabla \in \mathcal{P}_{n,d}$, a tangent vector at $\nabla$ corresponds to a matrix $\dot{A} = \left(\begin{matrix}
&0 \\
\dot{p}&
\end{matrix}\right)$
where $\dot{p}$ is an arbitrary polynomial of degree $d-2$. Theorem \ref{immersion} is equivalent to the statement that $\dot{A} \in \ker d_{\nabla}\nu$ if and only if $\dot{A} = 0$.  The next proposition gives a necessary and sufficient criterion, similar to that of Theorem \ref{jimbo}, that $\dot{A} \in \ker d_{\nabla}\nu$.  

\begin{proposition} \label{oper jmu}
Fix $\nabla \in \mathcal{P}_{n,d}$ where $d = kn$ and consider a tangent vector $\dot{A} \in T_\nabla\mathcal{P}_{n,d}$.  Then $\dot{A} \in \ker d_{\nabla}\nu$ if and only if there exists a polynomial matrix valued function $\Omega:\C \to \mathrm{End}(\C^n)$ such that
$$
\frac{\partial}{\partial z}\Omega = \dot{A} + [A,\Omega].
$$
\end{proposition}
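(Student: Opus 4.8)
The statement is a transcription of Theorem~\ref{jimbo} into the oper picture, carried across the fixed meromorphic gauge transformation $g=\mathrm{diag}(z^{(n-j)k})$. I would begin by unwinding the definitions: by construction $\nu(\nabla)=\nu([\nabla_0,f_0,v_0])$ with $\nabla_0=g[\nabla]$, and since $g$ depends only on $d$, not on the point of $\mathcal{P}_{n,d}$, the assignment $\nabla\mapsto\nabla_0$ is the restriction of a single affine map whose differential sends a tangent vector $\dot A$ to $\dot{\tilde A}:=g\dot A g^{-1}$ (the contribution $g'g^{-1}$ being independent of the point). By the chain rule, $\dot A\in\ker d_\nabla\nu$ if and only if the Stokes matrices of $\nabla_0$ are constant to first order in the direction $\dot{\tilde A}$. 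Two elementary observations will be used throughout: $g(1)=I_n$, and $g$ is holomorphic and invertible on $\C^{\ast}$.

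Next I would set up the gauge dictionary for the deformation equation. Writing $\tilde A=g[A]=g'g^{-1}+gAg^{-1}$ for the connection form of $\nabla_0$, a direct computation shows that $\Omega$ solves $\partial_z\Omega=\dot A+[A,\Omega]$ on $\C^{\ast}$ if and only if $\chi:=g\Omega g^{-1}$ solves $\partial_z\chi=\dot{\tilde A}+[\tilde A,\chi]$; since $g(1)=I_n$ the two agree at $z=1$, so $\chi=\Omega(\nabla_0,\dot{\tilde A},\Omega(1))$. The point that matches this correspondence to the polynomial condition is that $A$ and $\dot A$ have polynomial entries, so the oper equation has entire coefficients and \emph{every} solution of it is automatically holomorphic on all of $\C$. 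Hence, on one hand, if $\chi$ has only a pole at $\infty$ then $\Omega=g^{-1}\chi g$, being a solution of the oper equation (which has no singularity at $z=0$), extends holomorphically across $z=0$ notwithstanding the pole of $g^{-1}$ there, and conjugation by the diagonal matrix $g$ raises the order of the pole at $\infty$ only by a bounded amount, so $\Omega$ is a polynomial matrix. On the other hand, if $\Omega$ is a polynomial matrix solving the oper equation, then $\chi=g\Omega g^{-1}$ is rational in $z$ with poles confined to $0$ and $\infty$, exactly the poles of $\tilde A$ (the singularity at $z=0$ being an apparent one introduced by $g$).

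It then remains to invoke the isomonodromy results. For the ``only if'' direction: if $\dot A\in\ker d_\nabla\nu$ then the Stokes matrices of $\nabla_0$ are constant to first order, so by \eqref{monodromyidentity} the diagonal matrix $\exp(-2\pi i\,\Lambda_0)=S_{2k+2}\cdots S_1$ is constant to first order, which forces $\dot\Lambda_0=0$; the hypothesis of Theorem~\ref{jimbo} is now satisfied, and the theorem supplies an $M$ such that $\Omega(\nabla_0,\dot{\tilde A},M)$ has only a pole at $\infty$, which the dictionary of the previous paragraph converts into the required polynomial $\Omega$. For the ``if'' direction: given a polynomial $\Omega$ solving the oper equation, $\chi=g\Omega g^{-1}$ is a rational solution of the deformation equation of $\nabla_0$ whose poles lie among those of the connection form $\tilde A$, so by the criterion of Jimbo--Miwa--Ueno recalled in the introduction the deformation of $\nabla_0$ along $\dot{\tilde A}$ is infinitesimally isomonodromic; in particular its Stokes matrices are constant to first order, i.e.\ $\dot A\in\ker d_\nabla\nu$.

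The step I expect to require the most care is the pole bookkeeping across $g$ in the second paragraph: that ``$\chi$ has only a pole at $\infty$'' on the $\nabla_0$-side corresponds precisely to ``$\Omega$ is a polynomial matrix'' on the oper side --- where the observation that the oper equation has entire coefficients, so that the a priori pole of $g^{-1}\chi g$ at $z=0$ must cancel, is exactly what makes the translation clean --- together with the verification that $g\Omega g^{-1}$ is a legitimate deformation matrix for $\nabla_0$, its poles being confined to those of $\tilde A$. Removing the hypothesis on the exponent of formal monodromy in Theorem~\ref{jimbo} is the only other point, and that is handled, as above, via the monodromy identity \eqref{monodromyidentity}.
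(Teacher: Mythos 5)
Your argument is essentially the paper's own proof: conjugate by the fixed gauge transformation $g$, apply Theorem~\ref{jimbo} to $\nabla_0=g[\nabla]$, and use the fact that solutions of the oper deformation equation have entire coefficients to identify ``pole only at $\infty$'' with ``polynomial''. You additionally verify the exponent-of-formal-monodromy hypothesis via \eqref{monodromyidentity} and track the apparent singularity of $g\Omega g^{-1}$ at $z=0$ --- details the paper leaves implicit --- but the route is the same.
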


\begin{proof}  We wish to apply Theorem \ref{jimbo}. With notation as above write $\nabla_0 = d + B(z) dz$.  By definition, $\nu(\nabla) = \nu(\nabla_0)$ and so $\dot{A} \in \ker d_{\nabla} \nu$ if and only if $\dot{B} = g\dot{A}g^{-1} \in \ker d_{\nabla_0}\nu$.  Theorem \ref{jimbo} says that this is the case if and only if there exists a matrix $M$ such that the function $\Omega(\nabla_0,\dot{B},M)$ has only a pole at $\infty$. Recall that $\Omega(\nabla_0,\dot{B},M)$ is the unique solution to the ordinary differential equation
\begin{equation} \label{jmu proof}
\frac{\partial}{\partial z}\chi = \dot{B} + [B,\chi].
\end{equation}
satisfying the initial condition 
$$
\chi(1) = M.
$$
Recalling that $B= \frac{\partial g}{\partial z}g^{-1} + gAg^{-1}$, we find that $\chi = g^{-1}\Omega(\nabla_0,\dot{B}, M)g$ is the unique solution to the equation 
\begin{equation}
\frac{\partial}{\partial z}\chi = \dot{A} + [A,\chi] 
\end{equation}
satisfying the initial condition $\chi(1)= g(1)^{-1}Mg(1)$.  That is, we have
\begin{equation} \label{omega equation}
\Omega(\nabla, \dot{A}, g(1)^{-1}Mg(1)) = g^{-1}\Omega(\nabla_0,\dot{B},M)g.
\end{equation}
The right hand side of this equation has a pole at $\infty$; thus, so does the left hand side.  But, by construction, $\Omega(\nabla,\dot{A},g(1)^{-1}Mg(1))$ is holomorphic on $\C$.  That is, $\Omega = \Omega(\nabla,\dot{A},g(1)^{-1}Mg(1))$ is a polynomial function of $z$.
\end{proof}

\section{Representation Theory of $\mf{sl}(2,\C)$} \label{slrep}

In this section we briefly review some standard representation theory which will be put to use in the next section.  While the exposition given here is our own, the majority of this material can be found in standard texts such as \cite{fulton}.  However, the last result given in this section, Lemma \ref{ibracket}, is specific to our method.

We begin with the fact that $\mf{sl}(2,\C)$ has a unique irreducible representation of each dimension described as follows.  Let $V = \C^2$ with the standard action of $\mf{sl}(2,\C)$.  For $n \geq 2$ the $n$-dimensional irreducible representation is $V_n = \Sigma^{n-1}V$, where $\Sigma^k V$ is the $k^{th}$ symmetric tensor power of $V$.  

This induces an action of $\mf{sl}(2,\C)$ on $\C^n$ (by identifying $\C^n$ with $V_n$) and a Lie algebra homomorphism $\sigma_n: \mf{sl}(2,\C) \to \mf{sl}(n,\C)$.  Then, composing with the adjoint representation of $\mf{sl}(n,\C)$,
$$
\mf{sl}(2,\C) \xrightarrow{\sigma_n} \mf{sl}(n,\C) \xrightarrow{\mathrm{ad}} \mathrm{End}(\mf{sl}(n,\C)),
$$
we have realized $\mf{sl}(n,\C)$ as a representation of $\mf{sl}(2,\C)$ and it therefore decomposes into a direct sum of irreducible representations.  The following theorem and the construction to follow is implicitly given in \cite{kostant}.

\begin{theorem}
As an $\mf{sl}(2,\C)$-module,
$$
\mf{sl}(n,\C) \cong \bigoplus_{i=1}^{n-1}V_{2i+1}.
$$
\end{theorem}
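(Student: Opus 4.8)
The plan is to realize $\mf{sl}(n,\C)$ inside $\mathrm{End}(\C^n)$ and compute the $\mf{sl}(2,\C)$-module structure of the latter by the Clebsch--Gordan rule. First I would note that, by the very construction of $\sigma_n$, the defining module $\C^n$ is the irreducible $\mf{sl}(2,\C)$-module $V_n = \Sigma^{n-1}V$. Since every finite-dimensional $\mf{sl}(2,\C)$-module is self-dual, the dual $(\C^n)^{*}$ is again isomorphic to $V_n$, and hence
\[
\mathrm{End}(\C^n) \;\cong\; \C^n \otimes (\C^n)^{*} \;\cong\; V_n \otimes V_n
\]
as $\mf{sl}(2,\C)$-modules. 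The one point requiring a line of justification is that, under the identification $\mathrm{End}(\C^n) \cong \C^n \otimes (\C^n)^{*}$, the conjugation action of $\sigma_n\big(\mf{sl}(2,\C)\big)$ on $\mathrm{End}(\C^n)$ — i.e. $\ad$ precomposed with $\sigma_n$ — corresponds to the standard tensor-product action; this is immediate from the formula $\ad(X)(\phi) = X\phi - \phi X$.

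Next I would apply Clebsch--Gordan. Writing $V_m$ for the irreducible of dimension $m$ (highest weight $m-1$), one has
\[
V_n \otimes V_n \;\cong\; \bigoplus_{j=0}^{n-1} V_{2n-1-2j} \;=\; \bigoplus_{i=0}^{n-1} V_{2i+1},
\]
the summand of largest dimension being $V_{2n-1}$ and the smallest being $V_1$, the trivial module. The only mildly delicate aspect here is keeping straight the translation between highest weights and dimensions in the Clebsch--Gordan formula, but this is routine.

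Finally I would descend from $\mf{gl}(n,\C) = \mathrm{End}(\C^n)$ to $\mf{sl}(n,\C)$. The splitting $\mf{gl}(n,\C) = \mf{sl}(n,\C) \oplus \C I_n$ is one of $\mf{sl}(2,\C)$-modules: the line $\C I_n$ is $\ad$-invariant with trivial action, and $\mf{sl}(n,\C)$, being the kernel of the (invariant) trace functional, is a complementary submodule. This trivial line is precisely the $V_1$ summand found above, so cancelling it gives $\mf{sl}(n,\C) \cong \bigoplus_{i=1}^{n-1} V_{2i+1}$.

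If one prefers to avoid invoking Clebsch--Gordan, the same conclusion follows by a character count: $\sigma_n(H)$ acts on $\C^n$ with eigenvalues $n-1, n-3, \dots, -(n-1)$, so $\ad\,\sigma_n(H)$ acts on $\mf{gl}(n,\C)$ with the eigenvalue $2m$ occurring with multiplicity $n-|m|$ for $-(n-1)\le m\le n-1$; deleting the single eigenvector $I_n$ and matching the resulting multiplicities against those of $\bigoplus_{i=1}^{n-1} V_{2i+1}$ — using that an $\mf{sl}(2,\C)$-module is determined up to isomorphism by its $H$-spectrum — gives the theorem. I do not expect a genuine obstacle in either route; the only thing to watch is consistency of the indexing conventions.
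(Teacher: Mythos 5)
Your argument is correct: the identification $\mathrm{End}(\C^n)\cong V_n\otimes V_n^{*}\cong V_n\otimes V_n$ (using self-duality), the Clebsch--Gordan decomposition $V_n\otimes V_n\cong\bigoplus_{i=0}^{n-1}V_{2i+1}$, and the removal of the trivial summand $\C I_n$ all check out, as does your alternative character count (eigenvalue $2m$ of $\ad\,\sigma_n(H)$ has multiplicity $n-|m|$, matching $\bigoplus_{i=1}^{n-1}V_{2i+1}$ after deleting $I_n$). However, this is a genuinely different route from the paper's. The paper does not argue via tensor products at all; it cites Kostant for the statement and then effectively establishes it constructively, by exhibiting for each $1\le i\le n-1$ an explicit lowest weight vector $f_i\in S(-i)\cap\ker\ad_{\tilde f}$ of weight $-2i$, letting $v_{i,j}=(\ad_{\tilde e})^{i+j}f_i$ span an irreducible $(2i+1)$-dimensional submodule, and observing that $\sum_{i=1}^{n-1}(2i+1)=n^2-1$ forces the sum to be all of $\mf{sl}(n,\C)$. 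Your approach is shorter and more conceptual if the only goal is the isomorphism type, but it produces no preferred basis; the paper's constructive route is what supplies the basis $\{v_{i,j}\}$ adapted to the grading by off-diagonals $S(j)$, and the later analysis of the structure constants $a_{i,j}$ and $c_{i,j,k}$ (Lemmas 5.6 and 5.8, and the whole of Section 6) depends on having that concrete realization in hand. So your proof is a valid substitute for the abstract statement, but not for the construction the paper actually uses downstream.
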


\

We now describe a basis for $\mf{sl}(n,\C)$ adapted to this direct sum decomposition. Let $\{e,f,h\}$ be the usual basis of $\mathfrak{sl}(2,\C)$ satisfying the Serre relations. Then one can show by direct computation that
$$
\sigma_n(e) = \left(\begin{matrix} 
0 & 1 & 0 & \\
& 0 & 2 & 0 \\
& & \ddots &  &  \\
& & & 0 & n-2 & 0 \\
& & & & 0 & n-1 \\
& & & & & 0
\end{matrix}\right),
$$
$$
\sigma_n(f) =  \left(\begin{matrix}
0 &  &  & \\
n-1 & 0 &  &  \\
0 & n-2 & 0 &  &  \\
& & &  \ddots &  \\
& & 0 & 2 & 0  \\
& & & 0 & 1 & 0
\end{matrix}\right),
$$
and
$$
\sigma_n(h)=\left(\begin{matrix} 
n-1 &  &  & \\
    & n-3 &  &  \\
& &\ddots &   &  \\
& &  & -(n-3) &   \\
& & &   & -(n-1)
\end{matrix}\right).
$$

\

\begin{definition}
With notation as above, define 
$$
\tilde{e} = \sigma_n(e), \quad   \tilde{f}=\sigma_n(f), \quad \tilde{h} = \sigma_n(h).
$$
\end{definition}

Now, denote by $S(i)$ the $2i$-eigenspace of $\mathrm{ad}_{\tilde{h}}$.  Concretely, $S(i)$ is the subset of matrices whose only non-zero entries lie in the $i^{th}$ off-diagonal.  Thus, $S(0)$ is the subset of diagonal matrices.  For positive $i$, $S(i)$ is the subset of matrices whose only non-zero entries lie in the $i^{th}$ super-diagonal and for negative $i$, $S(i)$ is the subset of matrices whose only non-zero entries lie in the $i^{th}$ sub-diagonal.  

We record the next result as a lemma for its importance in what follows.  The proof is by direct computation.

\begin{lemma}
For $1 \leq i \leq n-1$, there is a unique element $f_i \in \mf{sl}(n,\C)$ which is in the intersection of $S(-i)$ with the kernel of the map $\mathrm{ad}_{\tilde{f}}$ and whose only non-zero coefficients are positive integers, the least of which is 1.  Each $f_i$ is a lowest weight vector  of weight $-2i$ for the action of $\mathfrak{sl}(2,\C)$ on $\mathfrak{sl}(n,\C)$.
\end{lemma}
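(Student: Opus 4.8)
The plan is to exploit the explicit decomposition $\mathfrak{sl}(n,\C) \cong \bigoplus_{i=1}^{n-1} V_{2i+1}$ together with the grading by eigenspaces $S(j)$ of $\mathrm{ad}_{\tilde h}$. First I would observe that in each irreducible summand $V_{2i+1}$ the weights under $\mathrm{ad}_{\tilde h}$ are exactly $-2i, -2i+2, \dots, 2i-2, 2i$, each occurring once; in particular the weight $-2i$ occurs in $V_{2i+1}$ and in no other summand $V_{2j+1}$ with $j < i$, and it is the lowest weight of $V_{2i+1}$. Hence $S(-i) \cap \ker \mathrm{ad}_{\tilde f}$ — the set of lowest weight vectors of weight $-2i$ — is exactly the one-dimensional lowest weight line of the summand $V_{2i+1}$. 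This immediately gives existence and uniqueness up to scalar; the content of the lemma is then the precise normalization statement (entries are positive integers with minimum $1$), and that this particular summand has its lowest weight in $S(-i)$.

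Next I would make the lowest weight vector concrete. Since $S(-i)$ consists of matrices supported on the $i$-th subdiagonal, write a general element as $X = \sum_{j} a_j\, E_{j+i,\,j}$ (indices ranging appropriately), and compute $\mathrm{ad}_{\tilde f}(X) = [\tilde f, X]$ using the explicit form of $\tilde f = \sigma_n(f)$, whose only nonzero entries are the subdiagonal entries $n-1, n-2, \dots, 1$. The bracket $[\tilde f, X]$ lands in $S(-i-1)$, and setting it to zero yields a two-term recursion among the coefficients $a_j$ of the form $c_j a_j = c'_j a_{j+1}$ with $c_j, c'_j$ explicit products of the subdiagonal entries of $\tilde f$. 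Solving this recursion shows each $a_j$ is a ratio of products of positive integers times a single free parameter; clearing denominators and choosing the scaling so the coefficients are coprime integers forces them to be positive integers with minimum value $1$. One then reads off that this $X =: f_i$ is indeed killed by $\mathrm{ad}_{\tilde f}$, lies in $S(-i)$, and has $\mathrm{ad}_{\tilde h} f_i = -2i\, f_i$, so it is a lowest weight vector of weight $-2i$.

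The last point to check is that $f_i$ as constructed is nonzero and genuinely generates the summand $V_{2i+1}$ — equivalently, that the recursion is consistent (not forcing all $a_j = 0$). This follows because the subdiagonal entries of $\tilde f$ are all nonzero, so none of the coefficients $c_j, c'_j$ in the recursion vanish, and the solution space is exactly one-dimensional, matching the fact that $V_{2i+1}$ appears with multiplicity one. The main obstacle is purely bookkeeping: organizing the index ranges for the off-diagonals of an $n\times n$ matrix, carrying out the commutator $[\tilde f, X]$ cleanly, and verifying that the resulting integer coefficients have minimum $1$ after clearing the common denominator; this is the ``direct computation'' the statement refers to, and I would present it by writing out the recursion explicitly for a general $i$ and then checking the normalization, rather than treating small cases separately.
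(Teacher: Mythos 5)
Your proposal is correct and matches the paper's approach: the paper gives no details beyond ``the proof is by direct computation,'' and your two-term recursion $(n-j-i)\,a_j=(n-j)\,a_{j+1}$ for the subdiagonal entries is exactly that computation (it yields $a_j=\binom{n-j}{i}$ up to scale, so the normalized coefficients are positive integers with least entry $\binom{i}{i}=1$), while your representation-theoretic observation that $S(-i)\cap\ker\mathrm{ad}_{\tilde f}$ is the one-dimensional lowest-weight line of $V_{2i+1}$ supplies the uniqueness the paper leaves implicit. One small wording caveat: being coprime integers does not by itself force the minimum to be $1$; that follows only after the explicit computation shows the smallest coefficient is a binomial coefficient equal to $1$, which you do note you would verify.
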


For example, $f_1 = \tilde{f}$ and $f_{n-1}$ is the matrix whose only non-zero entry is a 1 in the bottom left corner, i.e. the entry in row $n$, column 1:
$$
f_{n-1} = \left(
\begin{matrix}
0 & 0 & &\dots && 0 \\
0 & 0 & &\dots && 0 \\
  &   & &\ddots && \\
0 & 0 & &\dots && 0 \\
1 & 0 & &\dots && 0
\end{matrix}\right)
$$

Now, for $1 \leq i \leq n-1$ and $-i \leq j \leq i$, define 
$$
v_{i,j} = (\mathrm{ad}_{\tilde{e}})^{i+j}f_i.
$$
Then, one observes that $v_{i,j} \in S(j)$.  Furthermore, note that $v_{i,-i} = f_i$ and $v_{i,j+1} = \mathrm{ad}_{\tilde{e}}(v_{i,j})$ while $\mathrm{ad}_{\tilde{f}}(v_{i,j})$ is a multiple of $v_{i,j-1}$ (see Lemma \ref{ftilde} below).  More generally, for integers $j$ and $\ell$ such that $-(n-1) \leq j+\ell \leq n-1$ we have $[v_{i,j},v_{k,\ell}] \in S(j+\ell)$. 

\begin{lemma}
For fixed $i$, we have an isomorphism of $\mathfrak{sl}(2,\C)$-modules
$$
V_{2i+1} \cong \mathrm{span}\{v_{i,j} \mid -i \leq j \leq i\}.
$$
\end{lemma}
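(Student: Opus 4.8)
The plan is to recognize $\mathrm{span}\{v_{i,j}\mid -i\le j\le i\}$ as the cyclic $\mathfrak{sl}(2,\C)$-submodule of $\mathfrak{sl}(n,\C)$ generated by $f_i$, and then to read off its isomorphism type from the elementary structure theory of $\mathfrak{sl}(2,\C)$-modules. First, I would observe that $f_i$ is a lowest weight vector of weight $-2i$: by the preceding lemma it lies in $\ker(\ad_{\tilde{f}})$, and since $f_i\in S(-i)$, which by definition is the $(-2i)$-eigenspace of $\ad_{\tilde{h}}$, one has $\ad_{\tilde{h}}(f_i)=-2i\,f_i$. Applying the PBW theorem to $U(\mathfrak{sl}(2,\C))$ in the ordering $\mathbb{C}[\tilde{e}]\,\mathbb{C}[\tilde{h}]\,\mathbb{C}[\tilde{f}]$, together with $\ad_{\tilde{f}}(f_i)=0$ and the eigenvector property of $f_i$, shows that the submodule $M$ generated by $f_i$ is spanned by the vectors $(\ad_{\tilde{e}})^m f_i$ for $m\ge 0$; since $\mathfrak{sl}(n,\C)$ is finite dimensional there is a largest $N$ with $(\ad_{\tilde{e}})^N f_i\ne 0$, and then $(\ad_{\tilde{e}})^m f_i\ne 0$ precisely for $0\le m\le N$, because $(\ad_{\tilde{e}})^{m+1}f_i=\ad_{\tilde{e}}\!\big((\ad_{\tilde{e}})^m f_i\big)$.

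Next, I would pin down $N$ and the module type at once. Each $(\ad_{\tilde{e}})^m f_i$ lies in $S(m-i)$, hence is an $\ad_{\tilde{h}}$-eigenvector of weight $2(m-i)$; in particular the nonzero vectors among them have pairwise distinct (and even) weights, so they are linearly independent and every weight space of $M$ is at most one-dimensional. By complete reducibility of finite-dimensional $\mathfrak{sl}(2,\C)$-modules, $M$ splits as a sum of irreducibles, all odd-dimensional since all weights are even; a standard weight-multiplicity argument (two summands $V_{2a+1}$, $V_{2b+1}$ with $a\ge b$ would force the weight $-2b$ to occur at least twice) then forces $M$ to be irreducible, so $M\cong V_{N+1}$. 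Comparing the lowest weight $-N$ of $V_{N+1}$ with the weight $-2i$ of the lowest weight vector $f_i$ gives $N=2i$. Therefore $M=\mathrm{span}\{(\ad_{\tilde{e}})^m f_i\mid 0\le m\le 2i\}$, which under the reindexing $m=i+j$ is exactly $\mathrm{span}\{v_{i,j}\mid -i\le j\le i\}$, and $M\cong V_{2i+1}$ as claimed.

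An equivalent but more computational route avoids the generation argument: one verifies directly that the span is stable under $\ad_{\tilde{e}}$, $\ad_{\tilde{f}}$, and $\ad_{\tilde{h}}$ — namely $\ad_{\tilde{e}}$ sends $v_{i,j}\mapsto v_{i,j+1}$ for $j<i$ and annihilates $v_{i,i}$, $\ad_{\tilde{f}}$ sends $v_{i,j}$ to a scalar multiple of $v_{i,j-1}$ by Lemma~\ref{ftilde} and annihilates $v_{i,-i}=f_i$, and $\ad_{\tilde{h}}$ scales $v_{i,j}$ by $2j$ — after which the fact that the weights $-2i,-2i+2,\dots,2i$ each occur with multiplicity one identifies the span with $V_{2i+1}$. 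In either route the only delicate point is the nonvanishing $(\ad_{\tilde{e}})^{2i}f_i\ne 0$, i.e.\ that the $\mathfrak{sl}(2,\C)$-string through $f_i$ attains its full length $2i+1$ rather than terminating prematurely; I expect this to be the one thing needing attention, but it is not a serious obstacle, since in the first route it is an automatic consequence of irreducibility together with the lowest-weight computation, and in the second it follows from the explicit matrix descriptions of $f_i$ and $\tilde{e}$.
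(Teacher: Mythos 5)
Your proof is correct and is essentially an expanded version of the paper's own argument, which simply notes that the $v_{i,j}$ span a $(2i+1)$-dimensional $\mathfrak{sl}(2,\C)$-invariant subspace that is irreducible ``by the theory of highest weight vectors.'' You supply the details the paper leaves implicit (invariance via the cyclic submodule generated by the lowest weight vector $f_i$, linear independence via distinct $\ad_{\tilde h}$-weights, and irreducibility via weight multiplicities), so no further changes are needed.
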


\begin{proof}
The set $\{v_{i,j}\}$ span a subspace of dimension $2i+1$ of $\mf{sl}(n,\C)$ which is invariant under the action of $\mf{sl}(2,\C)$.  It is irreducible by the theory of highest weight vectors.
\end{proof}

In the proof of Theorem \ref{immersion} we will need to understand the linear map
$$
\mathrm{ad}_{f_{n-1}}: \mathfrak{sl}(n,\C) \to \mathfrak{sl}(n,\C)
$$
in terms of the basis $\{v_{i,j}\}$.  The first thing to observe is that $\mathrm{ad}_{f_{n-1}}$ maps $S(j)$ to $S(j-(n-1))$.  In particular, if $j<0$ then $[f_{n-1},v_{i,j}]=0$.  We can also note that, since the $v_{i,j}$ are integer valued matrices, the coefficients of the matrix representing $\ad_{f_{n-1}}$ in this basis will be rational numbers.  These coefficients are known as \textit{structure constants}.  The next two lemmas describe properties of these structure constants which will be put to use in the proof of Theorem \ref{immersion}.

\

\begin{remark}
In general, given a basis $\{x_i\}$ for a complex Lie algebra $\mathfrak{a}$, the structure constants for $\mathfrak{a}$ relative to the basis $\{x_i\}$ are defined by the equation
$$
[x_i,x_j] = \sum c_{k}^{i,j}x_k.
$$
For an arbitrary basis, the structure constants $c_k^{i,j}$ are complex numbers which must satisfy certain conditions required by the Jacobi identity.

In our  case however, for the Lie algebra $\mathfrak{sl}(n,\C)$ with the basis $\{v_{i,j} | 1 \leq i \leq n-1, -i \leq j \leq i\}$ we have
$$
[v_{i,j},v_{k,\ell}]=\sum c_{m}^{i,j,k,\ell}v_{m, j + \ell}
$$
for some $c_m^{i,j,k,\ell} \in \Q$.  Below, we analyze the structure constants relevant to computing the maps $\mathrm{ad}_{\tilde{f}}=[\tilde{f},\cdot]$ and $\ad_{f_{n-1}}=[f_{n-1},\cdot]$ in the basis $\{v_{i,j}\}$. 
\end{remark}

\

\begin{lemma} \label{ftilde}
Define $a_{i,j} \in \Q$ by the condition
$$
\mathrm{ad}_{\tilde{f}}(v_{i,j}) = a_{i,j}v_{i,j-1}.
$$
Then, $a_{i,j}=(i+j)(i-j+1)$.  In particular, for $-i+1 \leq j \leq i$, we have $a_{i,j}>0$.
\end{lemma}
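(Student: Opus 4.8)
The plan is to use the standard $\mathfrak{sl}(2,\C)$ representation theory set up in this section, exploiting the fact that $\mathrm{span}\{v_{i,j} \mid -i \leq j \leq i\} \cong V_{2i+1}$ as an $\mathfrak{sl}(2,\C)$-module, with $v_{i,j}$ a weight vector of weight $2j$ for $\mathrm{ad}_{\tilde h}$. Since $v_{i,j+1} = \mathrm{ad}_{\tilde e}(v_{i,j})$ by definition and $v_{i,-i} = f_i$ is a lowest weight vector, the vectors $v_{i,j}$ are exactly the images of the lowest weight vector under successive applications of the raising operator $\mathrm{ad}_{\tilde e}$. In the irreducible module $V_{2i+1}$, once one fixes the convention $v_{i,j+1} = \mathrm{ad}_{\tilde e}(v_{i,j})$, the coefficient $a_{i,j}$ defined by $\mathrm{ad}_{\tilde f}(v_{i,j}) = a_{i,j} v_{i,j-1}$ is completely determined by the commutation relations $[\tilde h, \tilde e] = 2\tilde e$, $[\tilde h, \tilde f] = -2\tilde f$, $[\tilde e, \tilde f] = \tilde h$ (which $\sigma_n$ inherits from the Serre relations for $\{e,f,h\}$), together with the base case $a_{i,-i} = 0$.

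The key computation is an induction on $j$. First I would record the base case: $v_{i,-i} = f_i$ lies in the kernel of $\mathrm{ad}_{\tilde f}$ by the lemma preceding this one, so $a_{i,-i} = 0 = (i + (-i))(i - (-i) + 1)$, and the formula checks out. For the inductive step, suppose $\mathrm{ad}_{\tilde f}(v_{i,j}) = a_{i,j} v_{i,j-1}$. Then, using $v_{i,j+1} = \mathrm{ad}_{\tilde e}(v_{i,j})$ and the Jacobi identity in the form $\mathrm{ad}_{\tilde f}\,\mathrm{ad}_{\tilde e} = \mathrm{ad}_{\tilde e}\,\mathrm{ad}_{\tilde f} - \mathrm{ad}_{[\tilde e,\tilde f]} = \mathrm{ad}_{\tilde e}\,\mathrm{ad}_{\tilde f} - \mathrm{ad}_{\tilde h}$, I compute
$$
\mathrm{ad}_{\tilde f}(v_{i,j+1}) = \mathrm{ad}_{\tilde f}\,\mathrm{ad}_{\tilde e}(v_{i,j}) = \mathrm{ad}_{\tilde e}\,\mathrm{ad}_{\tilde f}(v_{i,j}) - \mathrm{ad}_{\tilde h}(v_{i,j}) = a_{i,j}\,\mathrm{ad}_{\tilde e}(v_{i,j-1}) - 2j\, v_{i,j},
$$
since $v_{i,j}$ has $\mathrm{ad}_{\tilde h}$-weight $2j$. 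Now $\mathrm{ad}_{\tilde e}(v_{i,j-1}) = v_{i,j}$ by definition, so this equals $(a_{i,j} - 2j) v_{i,j}$, giving the recursion $a_{i,j+1} = a_{i,j} - 2j$. Solving this recursion with $a_{i,-i} = 0$ yields $a_{i,j} = \sum_{m=-i}^{j-1}(-2m) = -2\sum_{m=-i}^{j-1} m$, which is a routine telescoping sum that evaluates to $(i+j)(i-j+1)$; I would verify this closed form directly by checking that it satisfies both the recursion $a_{i,j+1} - a_{i,j} = -2j$ and the initial condition. Finally, for $-i+1 \leq j \leq i$ both factors $(i+j)$ and $(i-j+1)$ are strictly positive integers, so $a_{i,j} > 0$.

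There is essentially no serious obstacle here: the only thing to be careful about is the sign and normalization conventions — namely that $v_{i,-i}$ is the lowest (not highest) weight vector, that $\mathrm{ad}_{\tilde e}$ raises the weight by $2$ in the $v_{i,j}$ indexing (so $v_{i,j+1} = \mathrm{ad}_{\tilde e} v_{i,j}$ is consistent with weight $2j \mapsto 2(j+1)$), and that the sign in $[\tilde h, \tilde f] = -2\tilde f$ enters the Jacobi manipulation correctly. One should also note in passing that $v_{i,j-1}$ is nonzero for $j$ in the stated range, so the equation $\mathrm{ad}_{\tilde f}(v_{i,j}) = a_{i,j} v_{i,j-1}$ genuinely determines $a_{i,j}$ uniquely; this follows from $\mathrm{span}\{v_{i,j}\}$ being an irreducible $(2i+1)$-dimensional module, so its weight spaces are all one-dimensional.
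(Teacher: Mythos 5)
Your proof is correct and follows essentially the same route as the paper: an induction on $j$ using the Jacobi identity in the form $\mathrm{ad}_{\tilde f}\,\mathrm{ad}_{\tilde e}=\mathrm{ad}_{\tilde e}\,\mathrm{ad}_{\tilde f}-\mathrm{ad}_{\tilde h}$, the weight-$2j$ property of $v_{i,j}$, and the fact that $v_{i,-i}=f_i\in\ker\mathrm{ad}_{\tilde f}$, yielding the recursion $a_{i,j+1}=a_{i,j}-2j$. The only cosmetic difference is that you start the induction at $a_{i,-i}=0$ and solve the recursion by a telescoping sum, whereas the paper starts at $a_{i,-i+1}=2i$ and verifies the closed form directly at each step.
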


\begin{proof}
We proceed by induction.
First, recall that $[\tilde{e},\tilde{f}]=\tilde{h}$ and that $v_{i,j}$ is a $2j$-eigenvector for the map $\mathrm{ad}_{\tilde{h}}=[\tilde{h},\cdot]$.  Using this and the fact that $v_{i,-i} \in \ker \mathrm{ad}_{\tilde{f}}$ with the Jacobi identity, we find that
\begin{align*}
\mathrm{ad}_{\tilde{f}}(v_{i,-i+1}) &= [\tilde{f},v_{i,-i+1}] \\
&= [\tilde{f},[\tilde{e},v_{i,-i}]]\\
&= [v_{i,-i},[\tilde{e},\tilde{f}]] \\
&= -[\tilde{h},v_{i,-i}] \\
&= 2iv_{i,-i}.
\end{align*}
So, $a_{i,-i+1}=2i$.  

Now, suppose the claim holds for $j>-i+1$.  Then, we have
\begin{align*}
[\tilde{f},v_{i,j+1}] &= [\tilde{f},[\tilde{e},v_{i,j}]] \\
&= [\tilde{e},[\tilde{f},v_{i,j}]]+[v_{i,j},[\tilde{e},\tilde{f}]] \\
&= a_{i,j}[\tilde{e},v_{i,j-1}] -[\tilde{h},v_{i,j}] \\
&= (a_{i,j}-2j)v_{i,j}.
\end{align*}
This shows that $a_{i,j+1} = a_{i,j}-2j$ and so, by the inductive hypothesis, we have
$$
a_{i,j+1} = (i+j)(i-j+1)-2j = (i+j+1)(i-j).
$$
\end{proof}

The next lemma is the key fact from this section used in the proof of Theorem \ref{immersion}.

\begin{lemma} \label{ibracket}   Let $0 \leq k \leq j \leq n-1$ but $k \neq n-1$.  Write
\begin{equation} 
[f_{n-1},v_{n-1-k,n-1-j}] = \sum_{i = \max(1,j)}^{n-1} c_{i,j,k}v_{i,-j}
\end{equation}
for some rational numbers $c_{i,j,k}$.  Then for fixed $i$ and $k$, if $c_{i,k,k}\neq 0$ then for all $j$ such that $k \leq j \leq i$ we have that $c_{i,j,k}$ is non-zero and has the same sign as $c_{i,k,k}$. 
\end{lemma}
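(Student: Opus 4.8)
The plan is to fix $i$ and $k$ with $c_{i,k,k} \neq 0$ and to run an induction on $j$, increasing from $j = k$ up to $j = i$, establishing at each step that $c_{i,j,k}$ is nonzero with the same sign as $c_{i,k,k}$. The induction will be driven by applying $\mathrm{ad}_{\tilde f}$ to the defining relation and comparing coefficients of the basis vectors $v_{i,-j}$. Concretely, I would start from
$$
[f_{n-1}, v_{n-1-k, n-1-j}] = \sum_{i = \max(1,j)}^{n-1} c_{i,j,k} v_{i,-j},
$$
apply $\mathrm{ad}_{\tilde f}$ to both sides, and use the Jacobi identity together with the fact (noted just before Lemma~\ref{ibracket}) that $\mathrm{ad}_{f_{n-1}}$ kills $S(m)$ for $m < 0$, so that $[f_{n-1}, [\tilde f, v_{n-1-k,n-1-j}]]$ is again a combination of the $v_{i,-j-1}$. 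Since $[\tilde f, f_{n-1}] = 0$ (because $f_{n-1}$ is a lowest weight vector, by the lemma preceding the $v_{i,j}$ construction, and lowest weight vectors are annihilated by $\mathrm{ad}_{\tilde f}$), the Jacobi identity gives
$$
[f_{n-1}, [\tilde f, v_{n-1-k,n-1-j}]] = [\tilde f, [f_{n-1}, v_{n-1-k,n-1-j}]].
$$
On the left, $[\tilde f, v_{n-1-k,n-1-j}] = a_{n-1-k,\,n-1-j}\, v_{n-1-k,n-2-j}$ by Lemma~\ref{ftilde}, so the left side equals $a_{n-1-k,\,n-1-j}\sum_i c_{i,j+1,k} v_{i,-j-1}$. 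On the right, $[\tilde f, v_{i,-j}] = a_{i,-j} v_{i,-j-1}$, again by Lemma~\ref{ftilde}, so the right side equals $\sum_i c_{i,j,k}\, a_{i,-j}\, v_{i,-j-1}$. Matching coefficients of $v_{i,-j-1}$ yields the recursion
$$
a_{n-1-k,\,n-1-j}\, c_{i,j+1,k} = a_{i,-j}\, c_{i,j,k}.
$$

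Given this recursion, the inductive step is immediate provided the two structure constants $a_{n-1-k,n-1-j}$ and $a_{i,-j}$ appearing in it are nonzero and have the same sign. By Lemma~\ref{ftilde}, $a_{p,q} = (p+q)(p-q+1)$, and this is strictly positive whenever $-p+1 \le q \le p$. I need to check that the relevant indices fall in this range for $k \le j \le i-1$ (the range over which we step from $j$ to $j+1$). For $a_{n-1-k,\,n-1-j}$: here $p = n-1-k$ and $q = n-1-j$; since $j \ge k$ we have $q \le p$, and since $j \le n-1$ we have $q \ge 0 > -p+1$ unless $p = 0$; the hypothesis $k \neq n-1$ ensures $p = n-1-k \ge 1$, and one checks $q \ge -p+1$ reduces to $j \le 2(n-1)-k-1$, which holds since $j \le i \le n-1$. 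So $a_{n-1-k,n-1-j} > 0$. For $a_{i,-j}$: here $p = i$, $q = -j$, and positivity requires $-i+1 \le -j \le i$, i.e. $-i \le j \le i-1$; since we only use this in the step $j \mapsto j+1$ with $j \le i-1$, and $j \ge k \ge 0 \ge -i$, both bounds hold, so $a_{i,-j} > 0$. Thus both coefficients in the recursion are positive, and the recursion propagates nonvanishing and sign of $c_{i,j,k}$ from $j = k$ to $j = i$ by induction. One should also double-check that $v_{i,-j}$ is actually a legitimate basis vector in each term, i.e. that $-i \le -j$, which is exactly $j \le i$ — so the sum is genuinely indexed by $i \ge \max(1,j)$ as stated, and the step never produces a vanishing-by-index artifact.

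The main obstacle I anticipate is bookkeeping the index ranges carefully: ensuring that $[\tilde f, v_{n-1-k, n-1-j}]$ is a genuine nonzero multiple of the next basis vector (not zero because we hit the bottom of the $\mathfrak{sl}(2,\mathbb{C})$-string), and that $\mathrm{ad}_{f_{n-1}}$ applied to the relevant $S(m)$ spaces lands where expected, so that the coefficient-matching across the Jacobi identity is valid term by term. The weight of $v_{n-1-k, n-1-j}$ under $\mathrm{ad}_{\tilde h}$ is $2(n-1-j)$, which lies in the allowed range $[-2(n-1-k), 2(n-1-k)]$ precisely because $k \le j \le 2(n-1) - k$; together with $k \neq n-1$ this is what keeps the string-lowering nondegenerate. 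Once the range conditions are nailed down, the argument is a clean two-line induction off the recursion, with no genuinely hard computation — the positivity of the $a$'s does all the work. I would also remark that the hypothesis $c_{i,k,k} \neq 0$ is exactly the base case, and the recursion shows moreover that $c_{i,j,k} = 0$ for some admissible $j$ forces $c_{i,k,k} = 0$, giving a slightly stronger all-or-nothing statement that may be convenient later.
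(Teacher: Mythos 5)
Your proposal is correct and follows essentially the same route as the paper: commute $\mathrm{ad}_{f_{n-1}}$ and $\mathrm{ad}_{\tilde f}$ via the Jacobi identity (using $[\tilde f, f_{n-1}]=0$), match coefficients to obtain the recursion $a_{n-1-k,\,n-1-j}\,c_{i,j+1,k}=a_{i,-j}\,c_{i,j,k}$, and conclude by induction from the positivity of the structure constants $a_{i,\ell}$ given in Lemma~\ref{ftilde}. Your extra index-range bookkeeping is sound and only makes explicit what the paper leaves implicit.
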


Before giving the proof, let us consider an example which will be put to use in the next section.  In the case $i=n-1$ and $k=n-2$, we can show that $c_{n-1,n-1,n-2}=2(n-1)$ and $c_{n-1,n-2,n-2}=2$ as follows.  Noting that $v_{1,0}=\tilde{h}$ and $v_{1,1} = -2\tilde{e}$, we have
\begin{align*}
[f_{n-1},v_{1,0}] & = [v_{n-1,-(n-1)},\tilde{h}] \\
&=2(n-1)v_{n-1,-(n-1)}
\end{align*}
and
\begin{align*}
[f_{n-1},v_{1,1}] &= [v_{n-1,-(n-1)},-2\tilde{e}] \\
&= 2v_{n-1,-(n-2)}.    
\end{align*}
This computation also shows that $c_{n-2,n-2,n-2}=0$ but we will not use this fact.

\begin{proof}
First, observe that $f_{n-1} \in \ker \mathrm{ad}_{\tilde{f}}$ and $\tilde{f} \in \ker \mathrm{ad}_{f_{n-1}}$.  In particular, with the Jacobi identity, this implies that $\mathrm{ad}_{f_{n-1}} \circ \mathrm{ad}_{\tilde{f}} = \mathrm{ad}_{\tilde{f}} \circ \mathrm{ad}_{f_{n-1}}$.  

Now, let $j_0=\max(1,j)$.  For $0 \leq j \leq n-2$ we have
\begin{align}
a_{n-1-k,n-1-j}[f_{n-1},v_{n-1-k,n-1-(j+1)}] &= [f_{n-1},[\tilde{f},v_{n-1-k,n-1-j}]] \nonumber \\
&= [\tilde{f},[f_{n-1},v_{n-1-k,n-1-j}]] \nonumber \\
&= \sum_{i=j_0}^{n-1}c_{i,j,k}[\tilde{f},v_{i,-j}] \nonumber \\
&= \sum_{i=j_0}^{n-1}c_{i,j,k}a_{i,-j}v_{i,-j-1}. \label{n-i}
\end{align}
Also,
\begin{equation}\label{n-i-1}
[f_{n-1},v_{n-1-k,n-1-(j+1)}] = \sum_{i=j+1}^{n-1} c_{i,j+1,k}v_{i,-j-1}
\end{equation}
Then linear independence of the $v_{i,j}$ with \eqref{n-i} and \eqref{n-i-1} together imply that
$$
c_{i,j+1,k} = \frac{a_{i,-j}}{a_{n-1-k,n-1-j}}c_{i,j,k}.
$$
It then follows by induction that
$$
c_{i,j+1,k} = \left(\prod_{\ell=k}^j \frac{a_{i,-\ell}}{a_{n-1-k,n-1-\ell}}\right)c_{i,k,k}.
$$
In particular, as $a_{i,\ell}>0$ for $1 \leq i \leq n-1$ and $-i+1 \leq \ell \leq i$, if $c_{i,k,k} \neq 0$ then $c_{i,j,k}\neq 0$ and has the same sign as $c_{i,k,k}$ for $k \leq j \leq i$.
\end{proof}

\section{Proof of Theorem \ref{immersion}} \label{proof}
As in Section \ref{oper section}, let $\mathcal{P}_{n,d}$ denote the space of (canonically normalized) meromorphic cyclic $\mathrm{SL}(n,\C)$-opers on $\C\mathbb{P}^1$ with a single pole.  An element $\nabla \in \mathcal{P}_{n,d}$ is locally equivalent to 
$$
d - \left(\begin{matrix}
0 & 1 & 0 & & & &  \\
& 0 & 1 & 0 & & & \\
& & & \ddots & & &  \\
&  & &  & 0 &  1 & 0 \\
&  & &  & & 0 & 1 \\
p & 0 & & \dots & & & 0
\end{matrix}\right)dz
$$
where $p$ is a polynomial of degree $d$.

Such a connection is gauge equivalent, using a constant gauge transformation, to one with connection form given by $\tilde{A} = \tilde{e} + cpf_{n-1}$, where $c$ is a non-zero constant.   Thus, without loss of generality, we assume $A = \tilde{e} + p f_{n-1}$.  A choice of tangent vector $\dot{A}$ amounts to a choice of polynomial $\dot{p}$ of degree strictly less than $\deg p -1$.  

With this notation the isomonodromy equation of Proposition \ref{oper jmu} becomes
\begin{equation} \label{sljmu}
\frac{d\Omega}{dz} = \dot{p}f_{n-1} + [\tilde{e},\Omega] + p[f_{n-1},\Omega]
\end{equation}
We write
$$
\Omega = \sum_{i=1}^{n-1}\sum_{j=-i}^i \omega_{i,j}v_{i,j}
$$
where the $v_{i,j}$ are as defined in section \ref{slrep}.  Then, equation \eqref{sljmu} is equivalent to a system of $n^2-1$ first order scalar differential equations.  There are four cases corresponding to the values of $j$:  
\begin{equation} \label{jpos}
\omega_{i,j}' = \omega_{i,j-1}, \quad j \geq 1;
\end{equation}
\begin{equation} \label{jneg}
\omega_{i,-j}' = \omega_{i,-j-1} + p\sum_{k=0}^{j} c_{i,j,k}\omega_{n-1-k,n-1-j}, 
\quad 0 \leq j < i \leq n-1;
\end{equation}

\begin{equation} \label{j=-i}
\omega_{i,-i}' =  p\sum_{k=0}^{i} c_{i,i,k}\omega_{n-1-k,n-1-i}, 
\quad 0 < i < n-1;
\end{equation}

\begin{equation} \label{n-1}
\omega_{n-1,-(n-1)}' = \dot{p} + p\sum_{k=0}^{n-2} c_{n-1,n-1,k}\omega_{n-1-k,0}.
\end{equation}

In the equations above the $c_{i,j,k} \in \Q$ are as defined in Lemma \ref{ibracket}.  We now argue that this system reduces to a system of $n-1$ higher order differential equations for just the $\omega_{i,i}$.

To do this we will make use of the following notation.  Given two $k$ times differentiable functions $f$ and $g$, we define the symbol $W_k(f,g)$ to be any arbitrary linear combination of derivatives of $f$ and $g$ of the form
\begin{equation} \label{Wk}
\sum_{j=0}^k c_{j}f^{(k-j)}g^{(j)}
\end{equation}
where the coefficients $c_j$ are all non-negative real numbers at least one of which is non-zero. We call $W_k(f,g)$ a \textit{weight k expression} in $f$ and $g$.  In the argument below, the weight $k$ expression $W_k(f,g)$ may denote a different linear combination each time it appears, as the coefficients $c_j$ may change. Also, note that 
\begin{equation} \label{dW}
\frac{d}{dz}W_k(f,g) = W_{k+1}(f,g)
\end{equation}
and observe that if $f$ and $g$ are non-zero polynomials, one of which is of degree $\geq k$, then 
$$
\deg W_k(f,g) = \deg f + \deg g - k.
$$  

Let us further denote by $W_k'(f,g)$ any expression of the form \eqref{Wk} with non-negative coefficients but allowing for the possibility that all are 0.  Thus, for non-zero polynomials $f$ and $g$, one of which is of degree $\geq k$, an expression $W'_k(f,g)$ is either a non-zero polynomial of degree $\deg f + \deg g -k$ or the zero polynomial.  Also, equation \eqref{dW} holds replacing $W_k$ and $W_{k+1}$ by $W'_k$ and $W'_{k+1}$, respectively.

The next proposition shows that the $n^2-1$ equations \eqref{jpos} - \eqref{n-1} reduce to $n-1$ equations given in terms of $\omega_{i,i}$ for $1 \leq i \leq n-1$ with a special form.  This proposition and the lemma to follow imply that the isomonodromy equation for meromorphic cyclic opers with a single pole has no non-trivial polynomial solutions.  The key step is an application of Lemma \ref{ibracket} which will allow us to turn a $W'$ expression into a $W$ expression.

\begin{proposition} \label{jmu}
For $1 \leq i \leq n-2$, consider the integers $j$ such that $c_{i,j,j} \neq 0$.  Let $m_i \leq i$ be the number of such integers and enumerate them in decreasing order, denoting them by  $j_{i,\ell}$ for $ 1 \leq \ell \leq m_i$ such that $i \geq j_{i,1} > j_{i,2} > \dots > j_{i,m_i} \geq 0$. 
Then, the $(2i+1)$ derivative of $\omega_{i,i}$ is a sum of weight expressions in $p$ and $\omega_{n-1-j_{i,\ell},n-1-j_{i,\ell}}$; specifically
$$
\omega_{i,i}^{(2i+1)} =  \sum_{\ell=1}^{m_i} \pm W_{i-j_{i,\ell}}(p,\omega_{n-1-j_{i,\ell},n-1-j_{i,\ell}}).
$$
For the case $i=n-1$, we have $j_{n-1,1}=n-2$ and
$$
\omega_{n-1,n-1}^{(2n-1)} = \dot{p} + W_1(p,\omega_{1,1}) + \sum_{\ell=2}^{m_{n-1}} \pm W_{n-1-j_{n-1,\ell}}(p,\omega_{n-1-j_{n-1,\ell},n-1-j_{n-1,\ell}}).
$$
\end{proposition}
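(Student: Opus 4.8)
The plan is to start from the $2i+1$ equations \eqref{jpos} governing the $j \geq 1$ block for a fixed $i$, namely $\omega_{i,j}' = \omega_{i,j-1}$ for $1 \leq j \leq i$, together with the equation \eqref{j=-i} (or \eqref{n-1} when $i = n-1$) governing $\omega_{i,-i}'$, and the chain \eqref{jneg} linking $\omega_{i,-j}$ down to $\omega_{i,-i}$. Iterating \eqref{jpos} gives $\omega_{i,i}^{(\ell)} = \omega_{i,i-\ell}$ for $0 \leq \ell \leq 2i$, so that $\omega_{i,i}^{(2i)} = \omega_{i,-i}$, and differentiating once more, $\omega_{i,i}^{(2i+1)} = \omega_{i,-i}'$. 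Thus the whole problem reduces to expressing $\omega_{i,-i}'$ as the claimed sum of weight expressions.

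For $1 \leq i \leq n-2$, equation \eqref{j=-i} gives $\omega_{i,-i}' = p\sum_{k=0}^{i} c_{i,i,k}\,\omega_{n-1-k,n-1-i}$. The issue is that the right-hand side involves the off-diagonal coefficients $\omega_{n-1-k,n-1-i}$ rather than the diagonal ones $\omega_{n-1-k,n-1-(n-1-k)} = \omega_{n-1-k,k}$... — so I would next use \eqref{jpos} again, this time applied to the index $i' = n-1-k$: since $n-1-i \geq 1$ whenever $i \leq n-2$, we have $\omega_{n-1-k,\,n-1-i} = \omega_{i',\,n-1-i}$, and iterating \eqref{jpos} downward from the top index $i' = n-1-k$ expresses this as a derivative of $\omega_{i',i'} = \omega_{n-1-k,n-1-k}$. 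Concretely, $\omega_{i',i'}^{(\ell)} = \omega_{i',i'-\ell}$, so taking $\ell = i' - (n-1-i) = i - k$ gives $\omega_{n-1-k,n-1-i} = \omega_{n-1-k,n-1-k}^{(i-k)}$ (valid precisely because $i - k \leq i' = n-1-k$, i.e. $i \leq n-1$, which holds). Substituting, $\omega_{i,i}^{(2i+1)} = \sum_{k=0}^{i} c_{i,i,k}\, p\, \omega_{n-1-k,n-1-k}^{(i-k)}$. Only the terms with $c_{i,i,k} \neq 0$ survive; relabeling those surviving values of $k$ as $j_{i,\ell}$ yields exactly $\sum_{\ell} \pm W_{i-j_{i,\ell}}(p, \omega_{n-1-j_{i,\ell},n-1-j_{i,\ell}})$, where the sign absorbs the sign of $c_{i,i,j_{i,\ell}}$ and the weight expression $W_{i-j_{i,\ell}}$ is the single monomial $p \cdot \omega^{(i-j_{i,\ell})}$ (a legitimate, if degenerate, weight expression since one coefficient is nonzero and the rest zero). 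The $i = n-1$ case is identical except that \eqref{n-1} supplies an extra $\dot p$ term and the $k=n-2$ term, for which $c_{n-1,n-1,n-2} = 2(n-1) \neq 0$ by the worked example, contributes $W_1(p,\omega_{1,1})$ since $n-1-j_{n-1,1} = n-1-(n-2) = 1$ and $i - j = (n-1)-(n-2) = 1$.

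The main obstacle is bookkeeping rather than conceptual: one must check carefully that at each use of \eqref{jpos} the index being lowered stays within the valid range $-i' \leq \cdot \leq i'$ (so that the relation $\omega_{i',j}' = \omega_{i',j-1}$ applies, rather than one of \eqref{jneg}--\eqref{n-1}), and that the number of derivatives applied never exceeds $2i'$. The inequalities $0 \leq k \leq i \leq n-2$ (or $i = n-1$) make all of these go through, but they need to be spelled out. A secondary subtlety is to phrase "$p \cdot \omega^{(m)}$" as an instance of the notation $W_m(p,\omega)$ — it is, with $c_0 = 1$ and all other $c_j = 0$ — and to make sure the stated $\pm$ signs are consistent with the signs of the $c_{i,i,k}$; nothing here is deep, but the statement should note it. With $\omega_{i,i}^{(2i+1)}$ so expressed, the remaining equations \eqref{jneg} are then redundant (they serve only to determine the off-diagonal $\omega_{i,-j}$ from the diagonal ones), so the full $n^2-1$ system is indeed equivalent to these $n-1$ higher-order scalar equations in the $\omega_{i,i}$.
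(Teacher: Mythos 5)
Your argument has a genuine gap at its very first step. You claim that iterating \eqref{jpos} gives $\omega_{i,i}^{(\ell)}=\omega_{i,i-\ell}$ for all $0\leq\ell\leq 2i$, hence $\omega_{i,i}^{(2i)}=\omega_{i,-i}$ and $\omega_{i,i}^{(2i+1)}=\omega_{i,-i}'$. But \eqref{jpos} only applies for $j\geq 1$, so the iteration is valid only down to $\omega_{i,i}^{(i)}=\omega_{i,0}$. The next derivative requires \eqref{jneg} with $j=0$, which gives $\omega_{i,0}'=\omega_{i,-1}+c_{i,0,0}\,p\,\omega_{n-1,n-1}$, and each subsequent descent through negative $j$ picks up further terms $p\sum_k c_{i,j,k}\omega_{n-1-k,n-1-j}$. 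Differentiating these repeatedly via the product rule is exactly what produces the multi-term weight expressions $W_{i-k}(p,\omega_{n-1-k,n-1-k})$ (containing $p'\omega$, $p''\omega$, etc., not just $p\,\omega^{(m)}$), and it is here that Lemma \ref{ibracket} is essential: one must check that the coefficients $c_{i,j,k}$ for varying $j$ (at fixed $i,k$) are all zero or all of one sign, so that the terms accumulating at each step combine into a single $W'$ expression with non-negative coefficients rather than cancelling. Your shortcut discards all of these contributions and never invokes Lemma \ref{ibracket} for this purpose.

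The resulting formula is not merely derived incorrectly; it is false. For $n=3$, $i=1$ one computes directly from \eqref{jpos}--\eqref{j=-i} that
\begin{equation*}
\omega_{1,1}^{(3)} = c_{1,1,1}\,p\,\omega_{1,1} + (c_{1,1,0}+c_{1,0,0})\,p\,\omega_{2,2}' + c_{1,0,0}\,p'\,\omega_{2,2},
\end{equation*}
whereas your expression $\sum_{k}c_{1,1,k}\,p\,\omega_{2-k,2-k}^{(1-k)}$ omits the $p'\omega_{2,2}$ term entirely. The paper's proof is an induction on the order of differentiation that tracks the extra terms from \eqref{jneg} at every step, converts off-diagonal coefficients to derivatives of diagonal ones via \eqref{j to k}, and uses Lemma \ref{ibracket} twice (once during the descent, once when substituting \eqref{j=-i} at the end) to keep the signs coherent. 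That bookkeeping, which you describe as the "main obstacle," is in fact the entire content of the proposition and cannot be bypassed in the way you propose.
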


\begin{proof}
From equations \eqref{jpos} and \eqref{jneg} we have
\begin{equation} \label{i+1}
\omega_{i,i}^{(i+1)} = \omega_{i,0}' = \omega_{i,-1} + c_{i,0,0}p\omega_{n-1,n-1}.
\end{equation}
We now differentiate $i-1$ more times, applying equations \eqref{jpos} and \eqref{jneg}, to give an expression for $\omega_{i,i}^{(2i)}$.  Let us give the next two steps to demonstrate how the inductive argument works.  First, differentiating \eqref{i+1} gives
\begin{align*}
\omega_{i,i}^{(i+2)} &= \omega_{i,-1}' + c_{i,0,0}(p\omega_{n-1,n-1}' + p'\omega_{n-1,n-1}) \\
&= \omega_{i,-2} + c_{i,1,1}p\omega_{n-2,n-2} + c_{i,1,0}p\omega_{n-1,n-2} + c_{i,0,0}(p\omega_{n-1,n-1}' + p'\omega_{n-1,n-1}) \\
&= \omega_{i,-2} + c_{i,1,1}p\omega_{n-2,n-2} + (c_{i,1,0} + c_{i,0,0})p\omega_{n-1,n-1}' + c_{i,0,0} p'\omega_{n-1,n-1}. 
\end{align*}
By Lemma \ref{ibracket}, $c_{i,1,0}$ and $c_{i,0,0}$ are either both 0 or both non-zero and have the same sign.  Thus, we may write
\begin{equation} \label{i+2}
\omega_{i,i}^{(i+2)}= \omega_{i,-2} + c_{i,1,1}p\omega_{n-2,n-2} \pm W'_1(p,\omega_{n-1,n-1}).
\end{equation}
The next step is similar.  Differentiating \eqref{i+2} gives
\begin{align*}
    \omega_{i,i}^{(i+3)} &= \omega'_{i,-2} + c_{i,1,1}(p'\omega_{n-2,n-2} + p \omega_{n-2,n-2}') \pm W'_2(p,\omega_{n-1,n-1}) \\
    &= \omega_{i,-3} + c_{i,2,2}p\omega_{n-3,n-3} + c_{i,2,1}p\omega_{n-2,n-3} + c_{i,2,0}p\omega_{n-1,n-3} \\
    & \quad \quad + c_{i,1,1}(p'\omega_{n-2,n-2} + p\omega'_{n-2,n-2}) \pm W'_2(p,\omega_{n-1,n-1}) \\
    &= \omega_{i,-3} + c_{i,2,2}p\omega_{n-3,n-3} + c_{i,2,1}p\omega_{n-2,n-2}' + c_{i,2,0}p\omega''_{n-1,n-1} \\
    & \quad \quad + c_{i,1,1}(p'\omega_{n-2,n-2} + p\omega'_{n-2,n-2}) \pm W'_2(p,\omega_{n-1,n-1}).
\end{align*}
The weight 2 expression $W'_2(p,\omega_{n-1,n-1})$ above is 0 unless $c_{i,0,0} \neq 0$, in which case $c_{i,2,0}$ is also non-zero and has the same sign as $c_{i,0,0}$ by Lemma \ref{ibracket}.  Also by Lemma \ref{ibracket}, $c_{i,2,1}$ and $c_{i,1,1}$ are either both 0 or both non-zero with the same sign.  
Thus, we can write
$$
\omega_{i,i}^{(i+3)} = \omega_{i,-3} + c_{i,2,2}p\omega_{n-3,n-3} \pm W'_1(p,\omega_{n-2,n-2}) \pm W'_2(p,\omega_{n-1,n-1}).
$$
Continuing in this way, it follows by induction that
$$
\omega_{i,i}^{(2i)}= \omega_{i,-i} + c_{i,i-1,i-1}p\omega_{n-i,n-i} + \sum_{k=0}^{i-2} \pm W_{i-k-1}'(p,\omega_{n-1-k,n-1-k})
$$
Differentiating once more gives
\begin{align} \label{ii}
\omega_{i,i}^{(2i+1)} = \omega_{i,-i}' & + c_{i,i-1,i-1}(p\omega_{n-i,n-i}'+p'\omega_{n-i,n-i}) \nonumber \\
&+\sum_{k=0}^{i-2} \pm W_{i-k}'(p,\omega_{n-1-k,n-1-k}).
\end{align}
It is important to note here that the expression $W'_{i-k}(p,\omega_{n-1-k,n-1-k})$ is 0 unless $c_{i,k,k} \neq 0$.

Next, from equation \eqref{j=-i} we have
\begin{equation} \label{i-i}
\omega_{i,-i}' = c_{i,i,i}p\omega_{n-1-i,n-1-i} + p\sum_{k=0}^{i-1} c_{i,i,k}\omega_{n-1-k,n-1-i}
\end{equation}
for $1 \leq i \leq n-2$.
Here, observe that for $0 \leq k \leq j \leq n-1$ but $k \neq n-1$, equation \eqref{jpos} gives
\begin{equation} \label{j to k}
 \omega_{n-1-k,n-1-k}^{(j-k)}=\omega_{n-1-k,n-1-j}.
\end{equation}
With this, equation \eqref{i-i} becomes
\begin{align}
\omega_{i,-i}'&= c_{i,i,i}p\omega_{n-1-i,n-1-i} + p\sum_{k=0}^{i-1}  c_{i,i,k}\omega_{n-1-k,n-1-k}^{(i-k)} \nonumber \\
&= c_{i,i,i}p\omega_{n-1-i,n-1-i}' + \sum_{k=0}^{i-1} \pm W'_{i-k}(p,\omega_{n-1-k,n-1-k}). \label{last step}
\end{align}
Substituting \eqref{last step} into \eqref{ii}, we have
\begin{align} \label{W'}
\omega_{i,i}^{(2i+1)} = & c_{i,i,i}p\omega_{n-1-i,n-1-i}  + (c_{i,i,i-1} + c_{i,i-1,i-1})p\omega_{n-i,n-i}' \\ &+ c_{i,i-1,i-1}p'\omega_{n-i,n-i}  
 +\sum_{k=0}^{i-2} \pm W_{i-k}'(p,\omega_{n-1-k,n-1-k}). \nonumber
\end{align} 
Again using Lemma \ref{ibracket}, we find that \eqref{W'} can be written
\begin{equation} \label{W''}
    \omega_{i,i}^{(2i+1)} = \sum_{k=0}^i \pm W'_{i-k}(p,\omega_{n-1-k,n-1-k}).
\end{equation}  
Now, as was noted above, the expression $W'_{i-k}(p,\omega_{n-1-k,n-1-k})$ appearing in \eqref{W''} is 0 unless $c_{i,k,k} \neq 0$.  As in the statement of the proposition, define a decreasing sequence $(j_{i,\ell})_{\ell=1}^{m_i}$ such that $i \geq j_{i,1} > j_{i,2} > \dots > j_{i,m_i} \geq 0$ and $c_{i,j_{i,\ell},j_{i,\ell}}\neq 0$.  Then \eqref{W''} becomes
$$
\omega_{i,i}^{(2i+1)} = \sum_{\ell=1}^{m_i} \pm W_{i-j_{i,\ell}}(p,\omega_{n-1-j_{i,\ell},n-1-j_{i,\ell}})
$$
and the proof is complete for the case $1 \leq i \leq n-2$.

For $i = n-1$, we use equation \eqref{n-1} and \eqref{j to k} to get
\begin{align*}
\omega_{n-1,-(n-1)}' &= \dot{p} + c_{n-1,n-1,n-2}p\omega_{1,0} + p\sum_{k=0}^{n-3} c_{n-1,n-1,k} \omega_{n-1-k,0} \\
& = \dot{p} + c_{n-1,n-1,n-2}p\omega_{1,1}' \pm \sum_{k=0}^{n-3} W'_{n-1-k}(p,\omega_{n-1-k,n-1-k}).
\end{align*}
Substituting this into \eqref{ii} and applying the observation that $c_{n-1,n-2,n-2}=2 \neq 0$ completes the proof.
\end{proof}

\begin{lemma} \label{no solution}
The system described in Proposition \ref{jmu} has no non-trivial polynomial solutions if $\deg \dot{p} < \deg p -1$.
\end{lemma}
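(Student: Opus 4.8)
The plan is to show that any polynomial solution $(\omega_{i,i})_{i=1}^{n-1}$ of the system of Proposition~\ref{jmu} vanishes identically. Since equation~\eqref{n-1} then reduces to $0=\dot{p}$, this is exactly the assertion of the lemma, and combined with the reduction in Proposition~\ref{jmu} and with Proposition~\ref{oper jmu} it yields Theorem~\ref{immersion}. So I would suppose, for contradiction, that a polynomial solution has some $\omega_{i,i}$ nonzero, and argue by comparing degrees across the $n-1$ equations.

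The engine is a degree count for a single equation. Fix $i\le n-2$ with $\omega_{i,i}\neq 0$ and $\deg\omega_{i,i}\ge 2i+1$, so that the left-hand side $\omega_{i,i}^{(2i+1)}$ is a nonzero polynomial of degree $\deg\omega_{i,i}-(2i+1)$. On the right, each summand $W_{i-j_{i,\ell}}(p,\omega_{n-1-j_{i,\ell},n-1-j_{i,\ell}})$ for which $\omega_{n-1-j_{i,\ell},n-1-j_{i,\ell}}\neq 0$ is --- and this is precisely what Proposition~\ref{jmu} obtains from Lemma~\ref{ibracket}, namely that these are genuine weight expressions, with nonnegative coefficients not all zero --- a nonzero polynomial, of degree $\deg p+\deg\omega_{n-1-j_{i,\ell},n-1-j_{i,\ell}}-(i-j_{i,\ell})$; the degree formula applies because $\deg p=kn\ge n>i-j_{i,\ell}$. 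Comparing top coefficients, at least one summand survives, and whenever the highest-degree surviving summand is unique one reads off, writing $m=n-1-j_{i,\ell}$ for its $\omega$-index,
\[
\deg\omega_{i,i}=\deg p+\deg\omega_{m,m}+i+n-m ,
\]
so in particular $\deg\omega_{i,i}>\deg\omega_{m,m}$ since $m\le n-1$. The analogous statement holds for equation~\eqref{n-1}, with the additional point that, because $\deg\dot{p}<\deg p-1$, the forcing term $\dot{p}$ is strictly dominated by the $W_1(p,\omega_{1,1})$-term whenever $\omega_{1,1}\neq 0$, so it can only matter when $\omega_{1,1}=0$.

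I would then run this as a descent. The assignment $i\mapsto m$ is strictly degree-decreasing on the $\omega$'s, so it cannot recur around a cycle; starting from a nonzero component of maximal degree and iterating, the chain must --- the index set being finite --- terminate in a configuration where the mechanism breaks down: a component of ``low'' degree, $\deg\omega_{i,i}\le 2i$; an equation whose highest-degree right-hand summands cancel; or the bottom equation~\eqref{n-1} with $\dot{p}$ dominant, which by the previous paragraph forces $\omega_{1,1}=0$ and bounds $\deg\omega_{n-1,n-1}$. It helps to grade the components by $\nu_m:=\deg\omega_{m,m}-m$: a short computation shows that the right-hand summands of equation~$i$ that attain the degree of its left-hand side are exactly those whose $\omega$-index $m$ satisfies $\nu_m=\nu_i-(\deg p+n)$, so the leading coefficients of the various $\omega_{m,m}$ are coupled across ``$\nu$-strata'' at the single spacing $\deg p+n=(k+1)n$. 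In particular the component $a$ with smallest $\nu_a$ has nothing below it, so equation~$a$ --- when its left-hand side is not already zero for degree reasons --- forces its whole right-hand side to cancel down in degree by $\deg p+n$, a very rigid condition; analyzing this together with the low-degree and $\dot{p}$-dominant terminal cases should force all $\omega_{i,i}$ to vanish.

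The part I expect to require the most care is exactly this terminal analysis --- in particular the cancellation case. An individual weight expression never vanishes unexpectedly (this is where Lemma~\ref{ibracket} is indispensable: it is what upgrades the provisional $W'$-expressions occurring in the proof of Proposition~\ref{jmu} into genuine $W$-expressions), so a right-hand summand can disappear only through an exact cancellation of leading coefficients, and one has to check that no cancellation pattern compatible with the equations admits a nonzero vector of leading coefficients. Both hypotheses are used here: $d=kn$ makes every weight-expression degree identity exact, so the $\nu$-stratum spacing is the clean quantity $(k+1)n$; and $\deg\dot{p}<d-1$ neutralizes the forcing term, so that the descent has no escape through equation~\eqref{n-1}.
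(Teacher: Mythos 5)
Your degree-counting engine is the right one, and you correctly isolate the role of Lemma \ref{ibracket}: it is what guarantees that each surviving summand $W_{i-j_{i,\ell}}(p,\omega_{m,m})$ is a genuine nonzero polynomial of degree exactly $\deg p+\deg\omega_{m,m}-(i-j_{i,\ell})$, so that any cancellation must occur between distinct summands. But the global architecture --- a descent from the maximal-degree component downward until the mechanism ``breaks down'' --- leaves the actual content of the proof undone. The three terminal cases you list are dispatched with ``should force all $\omega_{i,i}$ to vanish,'' and none of them yields a contradiction as stated: a chain can perfectly well end at a nonzero component $\omega_{a,a}$ with $\deg\omega_{a,a}\le 2a$ whose own equation has identically vanishing left-hand side and a right-hand side that vanishes simply because every $\omega$ appearing in it is zero; no contradiction results, and nothing is learned about components off the chain. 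The cancellation case, which you yourself flag as the delicate one, is precisely the step you would have to carry out, and your set-up makes it genuinely hard.

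The missing idea is to run the comparison in the opposite direction: look at the equation in which a component of \emph{maximal} degree $d_0=\max_m\deg\omega_{m,m}$ appears on the \emph{right}. Since $\deg p=kn\ge n>i-j_{i,\ell}$ for every relevant pair, each weight expression has degree $\deg p+\deg\omega_{m,m}-(i-j_{i,\ell})>\deg\omega_{m,m}$; so if $\deg\omega_{m,m}=d_0$ and its summand's leading term survives in equation $i$, the right-hand side has degree exceeding $d_0$ while the left-hand side has degree at most $d_0-(2i+1)$ --- an immediate contradiction, with no descent and no terminal analysis. (Your own identity $\deg\omega_{i,i}=\deg p+\deg\omega_{m,m}+i+n-m$ already says this when read at the maximum.) The only cancellation one must then exclude is that of this single dominant summand within one equation, which the paper handles by ordering $j_{i,1}>j_{i,2}>\cdots$: the summand with the largest $j$ has the smallest weight, hence strictly the largest degree once its $\omega$-factor achieves $d_0$, so its leading coefficient cannot be cancelled by the others, and an induction over $\ell$ finishes. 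The paper then concludes because every index, in particular $\omega_{1,1}$ via equation \eqref{n-1}, occurs on some right-hand side. Your observation that $\dot p$ is strictly dominated by $W_1(p,\omega_{1,1})$ whenever $\omega_{1,1}\neq 0$ is the correct use of the hypothesis $\deg\dot p<\deg p-1$ and survives intact in this rearranged argument.
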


\begin{proof}
Suppose polynomial solutions exist and let $d_0 = \max(\deg \omega_{i,i})$.  Then, for $1 \leq i \leq n-2$, we have
$$
\omega_{i,i}^{(2i+1)}= \sum_{\ell=1}^{m_i} \pm  W_{i-j_{i,\ell}}(p,\omega_{n-1-j_{i,\ell},n-1-j_{i,\ell}})
$$
 by Proposition \ref{jmu}.
Assuming $\deg \omega_{n-1-j_{i,1},n-1-j_{i,1}} =d_0$ we have
$$
d_0-(2i+1) \geq \deg \omega_{i,i}^{(2i+1)} = \deg p + d_0 -i + j_{i,1}.
$$
This is a contradiction.  Thus, $\deg \omega_{n-1-j_{i,1},n-1-j_{i,1}} < d_0$. 

Assume now that we have shown that $\deg \omega_{n-1-j_{i,\ell},n-1-j_{i,\ell}} < d_0$ for $1 \leq \ell \leq k-1$.  Then, if $\deg\omega_{n-1-j_{i,k},n-1-j_{i,k}} = d_0$, we have
$$
d_0-(2i+1) \geq \deg \omega_{i,i}^{(2i+1)} = \deg p + d_0 -i + j_{i,k}.
$$
Again, this is a contradiction and it follows by induction that 
$$
\deg \omega_{n-1-j_{i,\ell},n-1-j_{i,\ell}} < d_0
$$ 
for $1 \leq \ell \leq m_i$.

Finally, we deal with the case $i=n-1$.  Suppose that $\deg \omega_{1,1} = d_0$.  Then, again by Proposition \ref{jmu} and our assumption that $\deg \dot{p} < \deg p -1$, this implies that
$$
d_0 -(2n-1) > \deg \omega_{n-1,n-1}^{(2n-1)} = \deg p +d_0 -1.
$$ 
This final contradiction completes the proof.  
\end{proof}

We can now prove Theorem \ref{immersion}.  We restate it here for the reader's convenience.

\begin{theorem}
If $d = kn$ for some positive integer $k$, then the monodromy map 
$$
\nu: \mathcal{P}_{n,d} \to \mathcal{B}
$$ is a holomorphic immersion.
\end{theorem}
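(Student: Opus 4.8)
The plan is to deduce Theorem \ref{immersion} directly from the chain of reductions established in Sections \ref{oper section}--\ref{proof}, so that the proof becomes essentially a bookkeeping argument assembling Proposition \ref{oper jmu}, Proposition \ref{jmu}, and Lemma \ref{no solution}. Concretely, by the discussion preceding Proposition \ref{oper jmu}, a tangent vector at $\nabla \in \mathcal{P}_{n,d}$ is a matrix $\dot A = \dot p\,f_{n-1}$ with $\dot p$ a polynomial of degree at most $d-2$, and $d_\nabla\nu$ fails to be injective precisely when there is a nonzero such $\dot A$ with $\dot A \in \ker d_\nabla\nu$. So the theorem is equivalent to showing: if $\dot p \neq 0$ and $\deg \dot p \le d - 2 < \deg p - 1$, then $\dot A \notin \ker d_\nabla\nu$.

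First I would invoke Proposition \ref{oper jmu}: $\dot A \in \ker d_\nabla\nu$ if and only if the isomonodromy equation $\Omega' = \dot p\, f_{n-1} + [\tilde e, \Omega] + p[f_{n-1},\Omega]$ (equation \eqref{sljmu}) admits a \emph{polynomial} matrix solution $\Omega$. Next I would expand $\Omega = \sum_{i,j}\omega_{i,j} v_{i,j}$ in the adapted basis from Section \ref{slrep}, which converts \eqref{sljmu} into the scalar system \eqref{jpos}--\eqref{n-1}; a polynomial matrix solution exists iff this scalar system has a polynomial solution. Then Proposition \ref{jmu} reduces this system to the $n-1$ higher-order equations for the $\omega_{i,i}$, and Lemma \ref{no solution}, applied with the hypothesis $\deg \dot p < \deg p - 1$ (which holds since $\deg \dot p \le d-2$ and $\deg p = d$), shows the only polynomial solution is the trivial one $\omega_{i,i} \equiv 0$ for all $i$. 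I would then need to trace back through \eqref{jpos}--\eqref{n-1} to conclude that $\omega_{i,i}\equiv 0$ for all $i$ forces $\Omega \equiv 0$ and hence $\dot p \equiv 0$: from \eqref{jpos} and \eqref{j to k} all $\omega_{i,j}$ with $j \ge 0$ vanish; working down the recursions \eqref{jneg}, \eqref{j=-i}, \eqref{n-1} then forces the remaining $\omega_{i,j}$ with $j<0$ to vanish and, crucially, \eqref{n-1} gives $0 = \dot p + 0$, so $\dot p = 0$. Thus $\ker d_\nabla\nu = 0$ at every $\nabla$, i.e.\ $\nu$ is an immersion; holomorphy of $\nu$ is already recorded in Section \ref{jimbo section}.

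The only genuinely non-routine part of the assembly is making sure the bookkeeping in the last step is airtight: one must check that the vanishing of all the $\omega_{i,i}$, propagated through the four families of equations \eqref{jpos}--\eqref{n-1}, really does kill \emph{every} coefficient $\omega_{i,j}$ and not just those on the "diagonal" $j=i$. This is where the precise structure of the reduction in Proposition \ref{jmu} (and the sign/nonvanishing control from Lemma \ref{ibracket}) matters: the higher-order equations were obtained precisely by eliminating the off-diagonal $\omega_{i,j}$ in favor of derivatives of the $\omega_{i,i}$, so each off-diagonal coefficient is expressed as a (derivative of a) diagonal one plus $p$ times lower terms, and one reads off vanishing by downward induction on the anti-diagonal index. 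I expect this to be the main obstacle in the sense that it is the one place a careless write-up could leave a gap; everything else is a direct citation of the results already proved.

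\begin{proof}
Let $\nabla \in \mathcal{P}_{n,d}$ with $d = kn$. As observed in Section \ref{jimbo section}, $\nu$ is holomorphic, so it suffices to show that $d_\nabla\nu$ is injective for every $\nabla$. By the discussion preceding Proposition \ref{oper jmu}, a tangent vector at $\nabla$ has the form $\dot A = \dot p\, f_{n-1}$ for a polynomial $\dot p$ with $\deg \dot p \le d-2$, and injectivity of $d_\nabla\nu$ is the assertion that $\dot A \in \ker d_\nabla\nu$ implies $\dot p = 0$.

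Suppose $\dot A \in \ker d_\nabla\nu$. By Proposition \ref{oper jmu} there is a polynomial matrix function $\Omega$ with $\Omega' = \dot A + [A,\Omega]$, where $A = \tilde e + p\, f_{n-1}$. Writing $\Omega = \sum_{i=1}^{n-1}\sum_{j=-i}^{i}\omega_{i,j}v_{i,j}$ in the basis of Section \ref{slrep}, this is equivalent to the scalar system \eqref{jpos}--\eqref{n-1} with all $\omega_{i,j}$ polynomials. Since $\deg \dot p \le d - 2 < d - 1 = \deg p - 1$, Proposition \ref{jmu} and Lemma \ref{no solution} apply and give $\omega_{i,i} \equiv 0$ for $1 \le i \le n-1$.

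It remains to see that this forces $\dot p = 0$. By \eqref{jpos}, $\omega_{i,j} = \omega_{i,i}^{(i-j)} \equiv 0$ for all $i$ and all $0 \le j \le i$; in particular every $\omega_{i,j}$ with $j \ge 0$ vanishes. Now fix $i$ with $1 \le i \le n-2$ and argue downward in $j$ from $j = 0$: \eqref{jneg} reads $\omega_{i,-j-1} = \omega_{i,-j}' - p\sum_{k=0}^{j}c_{i,j,k}\omega_{n-1-k,n-1-j}$, and since the right-hand side involves only coefficients already known to vanish (the $\omega_{n-1-k,n-1-j}$ have second index $n-1-j \ge 0$), we get $\omega_{i,-j-1} \equiv 0$ for all $0 \le j < i$, and \eqref{j=-i} is then consistent with $\omega_{i,-i} \equiv 0$ as well. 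Thus $\omega_{i,j} \equiv 0$ for all $i \le n-2$ and all $j$, and likewise $\omega_{n-1,j} \equiv 0$ for $j \ge 0$; the analogue of \eqref{jneg} for $i = n-1$ then gives $\omega_{n-1,j} \equiv 0$ for all $j$, so $\Omega \equiv 0$. Finally, \eqref{n-1} becomes $0 = \dot p + p\sum_{k=0}^{n-2}c_{n-1,n-1,k}\omega_{n-1-k,0} = \dot p$, so $\dot p = 0$ as claimed. Hence $\ker d_\nabla\nu = 0$ for every $\nabla \in \mathcal{P}_{n,d}$, and $\nu$ is a holomorphic immersion.
\end{proof}
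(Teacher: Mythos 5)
Your proof is correct and follows essentially the same route as the paper's: reduce via Proposition \ref{oper jmu} to the existence of a polynomial $\Omega$ solving \eqref{sljmu}, expand in the $v_{i,j}$ basis, and apply Proposition \ref{jmu} together with Lemma \ref{no solution}. The only difference is that you spell out the final bookkeeping (vanishing of all the $\omega_{i,i}$ propagates through \eqref{jpos}--\eqref{n-1} to force $\Omega\equiv 0$ and hence $\dot p=0$), which the paper compresses into the single sentence ``no such $\Omega$ can exist''; this extra care is a welcome clarification rather than a departure.
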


\begin{proof}
Let $\nabla \in \mathcal{P}_{n,d}$ and $\dot{A} \in T_\nabla\mathcal{P}_{n,d}$.
By Proposition \ref{oper jmu} and the discussion at the beginning of this section, $\dot{A} \in \ker d_\nabla\nu$ if an only if there exists a polynomial function $\Omega: \C \to \mathrm{End}(\C^n)$ satisfying \eqref{sljmu}.   But by Proposition \ref{jmu}, equation \eqref{sljmu} reduces to a system of $n-1$ equations which have no non-trivial polynomial solutions by Lemma \ref{no solution}.  Thus, no such $\Omega$ can exist.
\end{proof}

\pagebreak

\end{document}